\newtheorem{dummy}{dummy}[section]
\newtheorem{theorem}[dummy]{Theorem}
\newtheorem{corollary}[dummy]{Corollary}
\newtheorem{proposition}[dummy]{Proposition}
\theoremstyle{definition}
\newtheorem{definition}[dummy]{Definition}
\newtheorem{example}[dummy]{Example}
\newtheorem{remark}[dummy]{Remark}
\newcommand{\bC}{\mathbb{C}}
\newcommand{\bP}{\mathbb{P}}
\newcommand{\bR}{\mathbb{R}}
\newcommand{\bZ}{\mathbb{Z}}
\newcommand{\bfR}{\mathbf{R}}
\newcommand{\cB}{\mathcal{B}}
\newcommand{\cC}{\mathcal{C}}
\newcommand{\cE}{\mathcal{E}}
\newcommand{\cM}{\mathcal{M}}
\newcommand{\cO}{\mathcal{O}}
\newcommand{\cP}{\mathcal{P}}
\newcommand{\cR}{\mathcal{R}}
\newcommand{\cS}{\mathcal{S}}
\newcommand{\cT}{\mathcal{T}}
\newcommand{\cU}{\mathcal{U}}
\newcommand{\cV}{\mathcal{V}}
\newcommand{\cX}{\mathcal{X}}
\newcommand{\Spec}{\mathrm{Spec}\,}
\newcommand{\Fuk}{\mathrm{Fuk}}
\newcommand{\Sh}{\mathit{Sh}}
\newcommand{\Shv}{\mathrm{Shv}}
\newcommand{\Rep}{\mathit{Rep}}
\newcommand{\Hom}{\mathrm{Hom}}
\newcommand{\Perf}{\mathcal{P}\mathrm{erf}}
\newcommand{\MSh}{\mathit{MSh}}
\renewcommand{\SS}{\mathit{SS}}
\newcommand{\cpm}{\mathrm{CPM}}
\newcommand{\CPM}{\mathrm{CPM}}
\renewcommand{\SS}{\mathit{SS}}
\newcommand{\op}{\mathrm{op}}
\newcommand{\RGpc}{\mathcal{RG}_{\mathrm{pc}}}
\newcommand{\RGo}{\mathcal{RG}_{\mathrm{o}}}
\newcommand{\RGc}{\mathcal{RG}_{\mathrm{c}}}
\newcommand{\RTpc}{\mathcal{RT}_{\mathrm{pc}}}
\newcommand{\RTo}{\mathcal{RG}_{\mathrm{o}}}
\newcommand{\RTc}{\mathcal{RT}_{\mathrm{c}}}
\newcommand{\Cont}{\mathrm{Cont}}
\newcommand{\Fishn}{\mathrm{Chord}}
\newcommand{\dgCat}{\mathrm{dg}\mathcal{C}\mathrm{at}}
\newcommand{\dgCatS}{\dgCat^{\mathrm{S}}}
\newcommand{\SCat}{\mathrm{SCat}_\infty}
\newcommand{\Sp}{\mathrm{Sp}}
\newcommand{\SL}{\mathrm{SL}}
\newcommand{\Sch}{\mathrm{Sch}}
\newcommand{\St}{\mathrm{St}}
\newcommand{\Gpd}{\mathrm{Gpd}}
\newcommand{\Fun}{\mathrm{Fun}}
\begin{document}

\title[Ribbon Graphs and Mirror Symmetry I]{Ribbon Graphs and Mirror Symmetry I}

\begin{abstract}
Given a ribbon graph $\Gamma$ with some extra structure, we define,
using constructible sheaves, a
dg category $\cpm(\Gamma)$ meant to model the
Fukaya category of a Riemann surface in the cell
of Teichm\"uller space described by $\Gamma.$
When $\Gamma$ is appropriately decorated and admits
a combinatorial ``torus fibration with section,''
we construct from $\Gamma$
a one-dimensional algebraic stack $\widetilde{X}_\Gamma$ with toric components.
We prove that our model is equivalent to $\Perf(\widetilde{X}_\Gamma)$,
the dg category of
perfect complexes on $\widetilde{X}_\Gamma$.
\end{abstract}

\author{Nicol\'o Sibilla}
\address{Nicol\'o Sibilla, Department of Mathematics, Northwestern University,
2033 Sheridan Road, Evanston, IL  60208}
\email{sibilla@math.northwestern.edu}

\author{David Treumann}
\address{Eric Zaslow, Department of Mathematics, Northwestern University,
2033 Sheridan Road, Evanston, IL  60208}
\email{treumann@math.northwestern.edu}

\author{Eric Zaslow}
\address{Eric Zaslow, Department of Mathematics, Northwestern University,
2033 Sheridan Road, Evanston, IL  60208}
\email{zaslow@math.northwestern.edu}

\maketitle

{\small \tableofcontents}

\section{Introduction}

\subsection{Ribbon Graphs and HMS in One Dimension}
\label{rg}

Recall from the work of Harer, Mumford, Penner and Thurston (see, e.g., \cite{H}, \cite{P})
that ribbon graphs label cells in a
decomposition of the moduli
space of punctured Riemann surfaces.  The graph itself
is a retraction of the surface, also known as a \emph{skeleton} or \emph{spine.} 
Let $X_\Gamma$ denote a punctured Riemann surface
with spine $\Gamma.$
We imagine a Stein structure on $X_\Gamma$ so that $\Gamma$ is
the skeleton 
(the union of all stable manifolds)
of the Stein function. 
Kontsevich in \cite{K-stein} conjectured 
that the Fukaya category of a Stein manifold can be computed
locally on the skeleton and discussed applications to homological
mirror symmetry for Riemann surfaces.\footnote{Locality in the skeleton first 
appeared first in Kontsevich's
\cite{K-stein}, but is also part of a circle of ideas concerning the local
nature of the Fukaya category of exact symplectic manifolds, prevalent the work 
of Abouzaid \cite{A} and Seidel \cite{S-morse, S-spec} (see also \cite{AS}),
and in the relation of the Fukaya category to sheaf theory by Nadler \cite{N} and the last author \cite{NZ}.}
In this paper, we investigate this idea from the perspective
of constructible sheaves and T-duality.

We shall define, starting from a
ribbon graph $\Gamma$, a category $\cpm(\Gamma)$ (``constructible
plumbing model''), defined using
the language of
constructible sheaves.  $\cpm(\Gamma)$ serves as a stand-in for
the Fukaya category $Fuk(X_\Gamma)$ of the surface $X_\Gamma$.

When the ribbon graph is appropriately decorated and carries
a combinatorial version of a torus fibration with section,
then we can define a ``mirror'' curve $\widetilde{X}_\Gamma$,
an algebraic stack with toric components.  Exploiting
a Beilinson-Bondal-type equivalence called
the coherent-constructible correspondence \cite{B, FLTZ}, we prove
$$\cpm(\Gamma) \cong \Perf(\widetilde{X}_\Gamma),$$
an equivalence of dg categories.
We conjecture that $\cpm(\Gamma)\cong Fuk(X_\Gamma).$  Together
with the above result,
this would prove a one-dimensional version of homological
mirror symmetry.  

\subsection{Basic Idea; Simple Example}
\label{sec:ex}

Recall from \cite{NZ,N} that the Fukaya category of a cotangent bundle $X = T^*Y$
is equivalent to constructible sheaves on the base, $Y.$  One can specialize
to sheaves that are constructible for a particular stratification $\cS$ by
only considering a subcategory of the Fukaya category generated
by Lagrangians which are asymptotic to a conical Lagrangian
submanifold $\Lambda_\cS\subset T^*Y$ depending on $\cS.$ 
The conical Lagrangian $\Lambda,$ encodes the
``microlocal behavior'' of the corresponding sheaf.  
When the conical Lagrangian submanifold $\Lambda$ is strictly $\bR_+$-invariant and
not necessarily $\bR$-invariant in the fibers, the corresponding subcategory of sheaves
has ``singular support'' in $\Lambda,$ a somewhat more refined microlocal condition than
that defined by a stratification.
For example, when $Y=S^1,$ $\Lambda$ will be a subset of the zero section together
with a finite number of rays of cotangent fibers -- i.e., 
a graph with valency $\leq 4$ at each vertex.
For instance, the category of sheaves on $S^1$ stratified by a point and
its complement corresponds to $\Lambda$ which is the union of the zero
section and the cotangent fiber of the point -- a graph with
one 4-valent vertex and one loop.  The graph is a ribbon graph, since
the symplectic geometry of the cotangent
of $S^1$ determines
a cyclic ordering of the edges.  It was proven in \cite{B, FLTZ} that constructible
sheaves on $S^1$ with this stratification are equivalent to
coherent sheaves on $\bP^1.$  Since this category of constructible
sheaves is equivalent to a Fukaya category on $T^*(S^1)$, by \cite{NZ,N}, this equivalence is a form of 
or mirror symmetry.

The model $\cpm$ is constructed by gluing pieces of a graph together,
and can be inferred from the following example.
Consider a family of conics $C_t$ in $\bP^2$
defined in homogeneous coordinates by $XY = t^2 Z^2$, or in inhomogeneous
coordinates by $xy = t^2.$  These conics are all isomorphic to $\bP^1$ when
$t\neq 0$ but degenerate to two copies of $\bP^1$ attached at a node
when $t=0.$  The conics all have an open orbit $C_t^\circ$ of a \emph{common} $\bC^*\supset S^1$
defined by $\lambda: (x,y)\mapsto (\lambda x, \lambda^{-1}y),$
i.e $C_t^\circ$ has an $S^1$ fibration over $\bR = \bC^*/S^1$.
We can pick a $({\bC}^*)^2$-equivariant line bundle on $\bP^2$ such as the
hyperplane bundle, give it a metric (and thus compatible connection)
invariant under $(S^1)^2 \supset S^1$,
restrict it to $C_t^\circ$ and compute the monodromies over the $\bR$ family
of $S^1$ orbits.
The resulting spectral curve, the ``T-dual Lagrangian'' $L_t,$ lies
in $T^*S^1$ (the dual $S^1$, actually),
for $t\neq 0.$  We study the degeneration $t\rightarrow 0.$

When $t\rightarrow 0$, $L_t$
``splits'' into two Lagrangians on two different cotangents
of two copies of a circle
stratified by a point $p$ and its complement -- but they are joined: $+\infty$
inside $T^*_pS^1$ from the perspective of one circle is glued to $-\infty$
of the other circle (just as the corresponding limits of
$\bC^*$ orbits on the two $\bP^1$ components are joined).  In terms of
coordinates $(x,y\sim y+1)$ on $T^*S^1,$ the $L_t$
look like $y = t^2\sinh(2x)/(1+t^2\cosh(2x)).$
This splitting is illustrated
for two values of $t$.

\begin{figure}[ht]
\begin{center}
\includegraphics[scale=0.65]{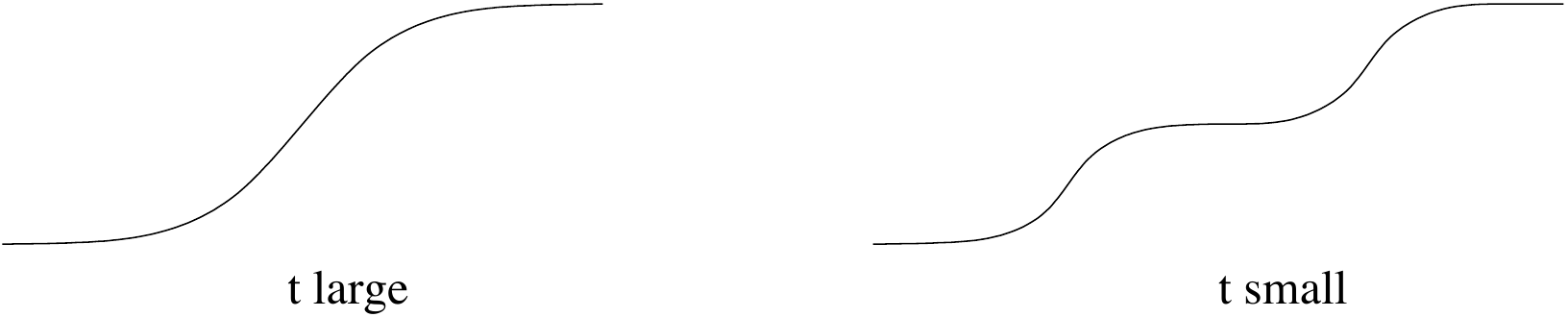}
\end{center}
\end{figure}

The picture suggests that
coherent sheaves (perfect complexes, actually \cite{FLTZ})
on the degenerate conic can be described by 
pairs of constructible sheaves on $S^1$ such that the microlocal stalk
of one sheaf along the ray to $+\infty$ in $T^*_pS^1$ equals the microlocal stalk of
the other sheaf along the ray to $-\infty$.  The graph $\Lambda$ for
such a gluing has two 4-valent vertices attached by a single
edge, and two loops -- a bit like a curtain rod with two rings. 
Extending this example, we can consider a further gluing of the
toric endpoints of the two $\bP^1$ components.  It becomes clear
what to do to build a constructible model for this algebraic
curve:  glue the curtain rod along its two ends.  The graph is a
circle with two circles attached at two distinct points -- a combinatorial
version of a torus fibration!

The algebraic curve obtained in this way is a degenerate elliptic curve --
a Calabi-Yau manifold with toric components, such as what appears at
the large complex structure limit point in mirror symmetry.
The ribbon graph category is a model for the Fukaya category of the
exact symplectic manifold at the large radius limit (the
symplectic manifold gets compactified in
deformations \cite{S-quartic} which are mirror to smoothing the
elliptic curve) corresponding to the ribbon graph,
and is equivalent to coherent sheaves on the algebraic mirror
Calabi-Yau.  Through this reasoning, then, we get a model for homological
mirror symmetry of degenerate Calabi-Yau manifolds.

To summarize:
\begin{itemize}
 \item Each four-valent vertex is modeled by the category
$Sh(\bR,\cdot)$ of constructible
sheaves on $\bR$ stratified by a point and its complement.
\item Two opposing half-edges of a four-valent vertex
represent the two $\bR$ directions,
while the remaining two represent the \emph{microlocal stalks} of the
constructible sheaves.  More succinctly,
$Sh(\bR,\cdot)$ has four functors to chain complexes (constructible
sheaves on edge intervals) defined
by the stalks or microlocal stalks at the rays corresponding
to the four half-edges.
\item Categories are glued as ``fiber products,'' i.e. by
taking objects at each vertex category and requiring that the
images of objects at two vertices agree along an edge which joins them.
\item When the graph has a map to a cycle graph, with circle
fibers over the vertices, we can join together the toric curves
corresponding to the fibers to make
a degenerate Calabi-Yau.  Coherent sheaves on this
Calabi-Yau will be isomorphic to the constructible plumbing
model of the ribbon
graph.
\end{itemize}

The discussion thus far has been quite informal.  The paper is dedicated to
making these observations rigorous.  In Section 2,
we introduce the notions needed to take fiber products and more general limits of dg categories,
and review material on algebraic stacks and ribbon graphs.
In Section 3, we define \emph{chordal} ribbon graphs, which encode the structure of
being ``locally like $T^*\bR$'' used to relate to constructible sheaves.  We
review the microlocal theory of sheaves in one dimension and define $\cpm(\Gamma)$
from a chordal ribbon graph $\Gamma$ by gluing, as discussed above.
In Section 4, we define the notion of ``dualizable'' ribbon graph, $\Gamma$ --
the structure called a ``combinatorial version of a torus fibration'' above --
which allows us to construct an algebraic curve $\widetilde{X}_\Gamma$ with toric
components such that $\Perf(\widetilde{X}_\Gamma)\cong \cpm(\Gamma).$

Our discussion of Fukaya categories and skeleta
suggests that the proper definition of $\cpm$ should not depend on its chordal structure.
Instead, one should work with the more general notion of a \emph{graded ribbon graph}.
In Section 5, we introduce the category of graded ribbon graphs, then
define a \emph{Fukaya formalism} to be a functor from this
category to dg categories, such that contractions are
represented by isomorphisms.  Proving that $\cpm$ extends to a Fukaya formalism is the
subject of work in progress.

\subsection{Influences}
\label{sec:influences}
Our construction is influenced by the work of many others,
 and we
are grateful for their inspiring works.  We highlight a few.
\begin{itemize}
\item \emph{Abouzaid} has a plumbing construction \cite{A} which
is equivalent to our own, in certain circumstances,
and has several other parallel constructions, e.g. \cite{Ab} (see also \cite{AS}).
 \item \emph{Beilinson-Bondal} describe combinatorial models for
coherent sheaves on toric varieties.  Bondal's reinterpretation \cite{B}
of Beilinson's quiver in terms of constructible sheaves was
an inspiration behind the coherent-constructible correspondence
defined in \cite{FLTZ}.
\item \emph{Kontsevich} conjectured \cite{K-stein} that the
Fukaya category is local in the skeleton.  The rumor of this announcement
inspired
our search for a model using the microlocal structure of sheaf theory.
\item \emph{Nadler} has pushed the envelope in relating
Fukaya categories to algebraic topology \cite{N} (we note also \cite{NT}).
\item \emph{Seidel} initiated the study of exact symplectic
manifolds as mirrors of Calabi-Yau manifolds at their toric
degeneration point (large complex structure limit), especially
in his {\sl tour de force} work \cite{S-quartic}.  Seidel discusses the
locality of the Fukaya category in \cite{S-morse} and (among other things) applies these
ideas to Landau-Ginzburg theories in \cite{S-spec}.
\end{itemize}

\emph{Acknowledgments:}  We are grateful to Kevin Costello, Dima Tamarkin,
and David Nadler for their interest and help.  We are greatly indebted
to Bohan Fang and Chiu-Chu Melissa Liu for sharing their thoughts, and
(for the second- and third-named authors) for our
several collaborations with them.
The work of EZ is supported in part by NSF/DMS-0707064.

\section{Notation and conventions}

\subsection{Background and categories and sheaves}

In this section we review material on category theory and constructible sheaves.  

As we do not make any direct comparisons to the ``genuine'' Fukaya category, we do not work with $A_\infty$-categories; instead, we work with dg categories.  Since the plumbing perspective we take in this paper requires us to build new dg categories out of old, we have to have a good handle on ``what dg categories form.''  The answer is contained in the work of many people, including Drinfel'd, Tabuada, To\"en, and Lurie.  We may summarize by saying: dg categories form a Quillen model category, which we may regard as a ``$\infty$-category'' or ``quasi-category'' via a simplicial nerve construction.  In section \ref{sec:inftyanddg} we recall what we will need from this theory.

We work over the ground field $\bC$.

\subsubsection{$\infty$-and dg categories}
\label{sec:inftyanddg}

We refer to \cite{htt} for the theory of $\infty$-categories, and use the same notation.  An $\infty$-category is a simplicial set satisfying the weak Kan condition (\cite[Definition 1.1.2.4]{htt}).  Common sources of $\infty$-categories are:
\begin{enumerate}
\item If $\cC$ is an ordinary category, then the nerve $N\cC$ is an $\infty$-category. (\cite[Proposition 1.1.2.2]{htt}).

\item If $\cC$ is a simplicial category, i.e. a category enriched in simplicial sets, with the property that the simplicial set of maps between any two objects of $\cC$ is a Kan complex, then the simplicial nerve $N\cC$ is an $\infty$-category.  (\cite[Proposition 1.1.5.10]{htt}).

\item In particular if $\cC$ is a simplicial model category and $\cC_{cf}$ is the full subcategory spanned by objects that are both fibrant and cofibrant, then $N\cC_{cf}$ is an $\infty$-category, sometimes called the ``underlying $\infty$-category'' of $\cC$.

\item
If $K$ and $\cC$ are simplicial sets we write $\Fun(K,\cC)$ for the simplicial set given by $\Delta^n \mapsto \Hom(K \times \Delta^n,\cC)$.  If $\cC$ is an $\infty$-category then so is $\Fun(K,\cC)$ (\cite[Proposition 1.2.7.3]{htt}). 


\end{enumerate}

The category of $\bC$-linear dg categories has a Quillen model structure introduced by Tabuada \cite{Tabuada} and studied by To\"en \cite{Toen}.  It has a natural simplicial enrichment \cite[Section 5]{Toen}.  We write $\dgCat$ for the underlying $\infty$-category of this simplicial model category.  We write $\dgCatS \subset \dgCat$ for the full subcategory (that is, sub-$\infty$-category) whose objects are triangulated dg categories.

\begin{remark}
\label{rem:dgcatsarecats}
We may realize $\dgCatS$ as a subcategory of the $\infty$-category $\SCat$ of \emph{stable $\infty$-categories} \cite{dag1}, and as a full subcategory of the $\infty$-category $\SCat^\bC$ of $\bC$-linear stable $\infty$-categories.  The latter category is defined as a full subcategory of the category of module categories for the symmetric monoidal $\infty$-category $\bC\text{-mod}$ \cite{dag3}.
\end{remark}

We have a notion of limits and colimits in $\infty$-categories, that behave very much like the classical 1-categorical notions \cite[Chapter 4]{htt}.  As $\dgCat$ is the underlying $\infty$-category of a simplicial model category, it admits small limits and colimits \cite[Corollary 4.2.4.8]{htt}.


\subsubsection{Grothendieck topologies and sheaves of dg categories}

We will need to describe Grothendieck topologies using the language of sieves.  Let $\cX$ be a small category, and let $X \in \cX$ be an object.  A \emph{sieve} on $X$ is a collection of arrows $Y \to X$ with the property that if $Y \to X$ belongs to the sieve and there exists a morphism $Y_1 \to Y$ then the composite $Y_1 \to Y \to X$ also belongs to $X$.

\begin{example}
If $\cX$ is the partially ordered set of open subsets of a topological space $X$, and $\{U_i\}$ is an open cover of $U \subset X$, then the collection of all open inclusions $V \hookrightarrow U$ that factor through one of the $U_i$ is a sieve on $U$.
\end{example}

We may regard a sieve as a full subcategory $\cU \subset \cX_{/X}$ of the comma category $\cX_{/X}$.  Given a sieve $\cU \subset \cX_{/X}$, and a morphism $f:Y \to X$, we define a sieve $f^* \cU$ on $Y$ by putting $U \to Y$ in $f^* \cU$ if the composite $U \to Y \to X$ belongs to $\cU$.

\begin{definition}
\label{def:gt}
A \emph{Grothendieck topology} on $\cX$ consists of, for each object $X$ of $\cX$, a collection of sieves on $X$ called ``covering sieves,'' subject to the following conditions:
\begin{itemize}
\item If $X$ is an object of $\cX$, then the trivial subcategory $\cX_{/X} \subset \cX_{/X}$ is a covering sieve.
\item If $f:Y \to X$ is a morphism and $\cU$ is a covering sieve on $X$, then $f^* \cU$ is a covering sieve on $Y$
\item Let $X$ be an object of $\cX$, let $\cU$ be a covering sieve on $X$, and let $\cV$ be an arbitrary sieve on $X$.  Suppose that, for each $f:Y \to X$ belonging to $\cU$, the pullback $f^* \cV$ is a covering sieve on $Y$.  Then $\cV$ is a covering sieve on $X$.
\end{itemize}
\end{definition}

In classical category theory, there is an order-preserving bijection between sieves on $X$ and subobjects of the functor $\Hom(-,X)$ from $\cX^{\op}$ to the category of sets: the subset of $\Hom(Y,X)$ corresponding to a sieve $U$ is the set of all morphisms $Y \to X$ belonging to $\cU$.  This construction extends to the $\infty$-categorical setting \cite[Proposition 6.2.2.5]{htt}: in particular, if $j:\cX \to \Fun(\cX^{\op},\cS)$ denotes the $\infty$-categorical Yoneda embedding, then for each sieve $\cU$ on $X$ we may associate an object $j(\cU) \in \Fun(\cX^{\op},\cS)$ and a morphism $j(\cU) \to j(X)$.

\begin{definition}
If $\cX$ is endowed with a Grothendieck topology, then a functor $F:\cX^{\op} \to \cS$ is a \emph{sheaf} if $\Hom(j(X),F) \to \Hom(j(\cU),F)$ is a weak homotopy equivalence for every $F$.
\end{definition}

\begin{remark}
The $\infty$-categorical Yoneda lemma \cite[Prop 5.1.3.1]{htt} identifies $\Hom(j(X),F)$ with $F(X)$.  We may furthermore identify $\Hom(j(\cU),F)$ with the inverse limit $\varprojlim_{(U \to X) \in \cU} F(U)$.  The sheaf condition is equivalent to the statement that the natural map
$$F(X) \to \varprojlim_{(U \to X) \in \cU} F(U)$$
is an equivalence for every covering sieve $\cU$---i.e. that ``$F(X)$ can be computed locally.''
\end{remark}

Write $\Shv(\cX) \subset \Fun(\cX^{\op},\cS)$ for the full subcategory of sheaves.  Let $\cC$ be an $\infty$-category.  A \emph{$\cC$-valued sheaf} on $\cX$ is a functor $\Shv(\cX)^{\op} \to \cC$ that converts small colimits in $\Shv(\cX)^{\op}$ to limits in $\cC$.  If $\cC$ has small limits then this is equivalent to a contravariant functor $\cX^{\op} \to \cC$ with the property that the natural map
$$F(X) \to \varprojlim_{(U \to X) \in \cU} F(U)$$
is an equivalence whenever $\cU$ is a covering sieve.  Write $\Shv(\cX,\cC)$ for the $\infty$-category of $\cC$-valued sheaves on $\cX$.

\begin{remark}
\label{rem:sheafofsheaves}
The construction $\Shv(\cX,\cC)$ provides a natural $\infty$-categorical version of some standard constructions in sheaf theory.  Let $X$ be a topological space and let $\cX$ be the Grothendieck site of open subsets of $X$.  We will write $\Shv(X,\cC)$ instead of $\Shv(\cX,\cC)$.  
\begin{enumerate}
\item  Let $\bC\text{-mod}$ be the $\infty$-category associated to the Quillen model category of cochain complex of complex vector spaces.  Then $\Shv(X,\bC\text{-mod})$ is an $\infty$-category whose homotopy category is naturally identified with $D(X;\bC)$, the unbounded derived category of sheaves of vector spaces on $X$.
\item $\Shv(X,\bC\text{-mod})$ is a $\bC$-linear stable $\infty$-category.  By remark \ref{rem:dgcatsarecats}, we may therefore identify it with a triangulated dg category via the equivalence $\SCat^\bC \cong \dgCat$.  The assignment $U \mapsto \Shv(U,\bC\text{-mod})$, where $U$ runs through open subsets of $X$, together with the restriction functors associated to inclusions $V \subset U$, assembles to an object of $\Shv(X,\dgCat)$---the ``sheaf of sheaves'' on $X$.
\end{enumerate}
\end{remark}

A sheaf on a Grothendieck site is determined by its behavior on a ``basis'':

\begin{proposition}
\label{prop:base}
Let $\cX$ be a Grothendieck site and let $\cB \subset \cX$ be a full subcategory with the property that each object of $\cX$ admits a covering sieve $\{U_i \to X\}$ such that each $U_i$ is in $\cB$.  Then the restriction functor $\Shv(\cX,\cC) \to \Shv(\cB,\cC)$ is an equivalence of $\infty$-categories.
\end{proposition}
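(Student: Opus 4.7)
The plan is to construct an inverse to restriction via right Kan extension along the inclusion $\iota: \cB \hookrightarrow \cX$. As a preliminary, endow $\cB$ with the Grothendieck topology induced from $\cX$: a sieve $\cU$ on $B \in \cB$ is declared covering if and only if the sieve it generates on $B$ in $\cX$ is covering. The three axioms of Definition \ref{def:gt} are inherited verbatim from $\cX$, and precomposition with $\iota^{\op}$ visibly sends $\cC$-valued sheaves on $\cX$ to $\cC$-valued sheaves on $\cB$ (any covering sieve on $B \in \cB$ is also a covering sieve on $B \in \cX$). This gives a well-defined restriction functor $\mathrm{Res}: \Shv(\cX,\cC) \to \Shv(\cB,\cC)$.

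Define the candidate inverse $R$ by the right Kan extension formula
$$(RF)(X) := \varprojlim_{(B \to X) \in \cB_{/X}} F(B).$$
The substantive step --- and the main obstacle --- is to verify that $RF$ is actually a sheaf on $\cX$, not merely a presheaf: given a covering sieve $\cU$ on $X$, one must show that the natural map $(RF)(X) \to \varprojlim_{(Y \to X) \in \cU}(RF)(Y)$ is an equivalence. Using the hypothesis and the local-character axiom of Definition \ref{def:gt}, one may replace $\cU$ by a refinement whose generating morphisms all have source in $\cB$. A Fubini-style interchange of the two limits involved, combined with the sheaf property of $F$ applied to the induced $\cB$-coverings of the members of $\cU$, identifies the right-hand side with $\varprojlim_{\cB_{/X}} F(B) = (RF)(X)$. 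Once this is cleared, the remainder of the argument is formal.

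To conclude, I would verify that $\mathrm{Res}$ and $R$ are mutually inverse. The composite $\mathrm{Res} \circ R$ is naturally equivalent to the identity on $\Shv(\cB,\cC)$ because $\mathrm{id}_B$ is a terminal object of $\cB_{/B}$, so the defining limit collapses to $F(B)$. Conversely, for $G \in \Shv(\cX,\cC)$, the hypothesis implies that the sieve generated by $\cB_{/X}$ is a covering sieve on $X$; the sheaf condition for $G$, together with a standard cofinality argument reducing the resulting limit to $\cB_{/X}$, gives $G(X) \simeq \varprojlim_{(B \to X) \in \cB_{/X}} G(B) = R(\mathrm{Res}\,G)(X)$. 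Both composites are therefore equivalences, so $\mathrm{Res}$ is the claimed equivalence of $\infty$-categories. The entire argument is an $\infty$-categorical adaptation of the classical comparison lemma for Grothendieck sites, with the $\infty$-categorical Yoneda lemma (cited above) and standard cofinality results for limits in $\infty$-categories doing the work that diagram chasing does in the $1$-categorical case.
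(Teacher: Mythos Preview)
The paper states this proposition without proof: it appears in the background section on categories and sheaves and is treated as a known result (the $\infty$-categorical version of the classical comparison lemma for Grothendieck sites). There is therefore no paper proof to compare against.

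Your approach is the standard one and is correct in outline. Constructing the inverse as the right Kan extension along $\iota^{\op}$, checking that it lands in sheaves, and then verifying the two triangle identities is exactly how this result is proved in the literature (see for instance \cite[Lemma 6.2.2.7 and the surrounding discussion]{htt} for closely related statements). A couple of the steps you flag as ``standard'' do require care in the $\infty$-categorical setting: the claim that restriction sends sheaves to sheaves needs a cofinality argument comparing a $\cB$-sieve with the $\cX$-sieve it generates (your parenthetical ``any covering sieve on $B \in \cB$ is also a covering sieve on $B \in \cX$'' is not quite the same statement and does not by itself suffice), and the Fubini/cofinality manipulations in the sheaf check for $RF$ should be justified via the $\infty$-categorical cofinality criterion. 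None of these is a genuine obstacle, and you have correctly identified the sheaf property of $RF$ as the substantive step.
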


\subsection{Background on algebraic stacks}

We will consider Deligne-Mumford stacks (that is, stacks with finite isotropy groups) defined over the complex numbers.

\begin{definition}
Let $\Gpd$ denote the full subcategory of the $\infty$-category of spaces spanned by spaces that can be obtained as the nerve of a groupoid.  Let $\Sch_{/\bC}$ denote the category of complex algebraic schemes.
A \emph{stack} is a functor $X:\Sch_{/\bC}^{\op} \to \Gpd$ that is a sheaf in the \'etale topology on $\Sch_{/\bC}$.
\end{definition}

We may regard the functor represented by an object of $S$ as valued in $\Gpd$, by regarding each $\Hom(T,S)$ as a discrete groupoid, in which case it becomes a stack.  We will abuse notation and denote this representable stack by $S$.  We say that a morphism of stacks $X \to Y$ is \emph{representable} if for each map $S \to Y$ with $S$ representable, the fiber product $X \times_Y S$ is also representable.  If P is a property of morphisms between schemes, then we will say that a representable morphism $X \to Y$ has property P if all base-changed maps $X \times_Y S \to S$ with $S$ representable have property P.

We restrict our attention to Deligne-Mumford stacks:

\begin{definition}
A stack $X:\Sch_{/\bC}^{\op} \to \Gpd$ is \emph{Deligne-Mumford} if it satisfies the following conditions:
\begin{enumerate}
\item There exists a representable \'etale morphism $U \to X$ where $U$ is itself representable
\item The diagonal map $X \to X \times X$ is representable and finite.
\end{enumerate}
We let $\St_{/\bC}$ denote the full subcategory of the $\infty$-category of functors $\Sch^{\op} \to \Gpd$ spanned by Deligne-Mumford stacks.
\end{definition}

\begin{remark}
The $\infty$-category $\St_{/\bC}$ can be obtained from the usual 2-category of Deligne-Mumford stacks via a 2-categorical nerve construction.
\end{remark}
 
A complex of quasicoherent sheaves on a scheme $X$ is \emph{perfect} if it is locally quasi-isomorphic to a bounded complex of vector bundles.  Perfect complexes are preserved by pullback.  We have descent for perfect complexes:

\begin{proposition}
\label{prop:descent}
There is a functor $\Perf:\Sch_{/\bC}^{\op} \to \dgCatS$ whose value on a scheme $X$ is the triangulated dg category of perfect complexes on $X$, and whose value on a morphism $f:X \to Y$ is the derived pullback functor $f^*$.  This functor is a sheaf in the \'etale topology on $\Sch_{/\bC}$.
\end{proposition}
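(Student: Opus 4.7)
The plan has two parts: constructing $\Perf$ as a bona fide functor of $\infty$-categories, and then verifying étale descent. For the construction, I would realize $\mathrm{QCoh}(X) \in \dgCat$ as the underlying $\infty$-category of the Quillen model category of unbounded chain complexes of $\cO_X$-modules, equipped with (say) a flat or injective model structure. For a morphism $f:X \to Y$, derived pullback is a left Quillen functor and therefore descends to a morphism of underlying $\infty$-categories. To package this into an honest functor $\Sch_{/\bC}^{\op} \to \dgCat$ (as opposed to a pseudofunctor with coherent-up-to-isomorphism compositions) I would use the Grothendieck construction, straightening a coCartesian fibration over $N(\Sch_{/\bC})$ built from the relative model category of pairs $(X, \cF)$ into a functor; this is the approach taken in To\"en's work. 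The subcategory $\Perf(X) \subset \mathrm{QCoh}(X)$ of perfect complexes is stable under derived pullback because perfectness is local on the target and preserved by arbitrary base change, so it cuts out a sub-functor $\Perf : \Sch_{/\bC}^{\op} \to \dgCatS$.

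For descent, I would first use Proposition~\ref{prop:base} to reduce to verifying the sheaf condition on a basis of affine schemes. For an affine $X = \Spec A$, every étale covering sieve can be refined by a single faithfully flat étale morphism $U = \Spec B \to X$, so descent reduces to verifying that the natural comparison
$$\Perf(A) \;\longrightarrow\; \varprojlim_{[n] \in \Delta} \Perf\!\bigl(B^{\otimes_A (n+1)}\bigr)$$
is an equivalence of triangulated dg categories. I would first prove the analogous statement for $\mathrm{QCoh}$ in place of $\Perf$: this is derived faithfully flat descent, and the essential input is that the Amitsur complex $A \to B \to B\otimes_A B \to \cdots$ is a resolution of $A$ whenever $B$ is faithfully flat over $A$, together with boundedness controls that allow passage to the unbounded derived category (cf.\ To\"en, Lurie's treatments in DAG). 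Descent for $\Perf$ then follows from descent for $\mathrm{QCoh}$ by combining two observations: perfectness is an fppf-local condition on the target, and the forgetful functor $\Perf \hookrightarrow \mathrm{QCoh}$ commutes with the relevant limits.

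I expect the hard part to be the construction step rather than descent. Once one commits to the $\infty$-categorical framework, descent is essentially classical faithfully flat descent combined with the local character of perfectness, so the technical work is concentrated in setting up a strictly functorial $\Perf$ with all required higher coherences. A clean alternative to the model-categorical route is to invoke the general formalism of symmetric monoidal $\infty$-categories of modules over $\bC\text{-mod}$ (cf.\ Remark~\ref{rem:dgcatsarecats}) and lift Beck--Chevalley/Barr--Beck arguments into that setting, as in Lurie's work; either route produces the same functor, and either can be cited for the technical content.
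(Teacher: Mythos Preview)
The paper does not prove Proposition~\ref{prop:descent}; it is stated without proof as background, with the content implicitly attributed to the foundational work of To\"en and Lurie cited in Section~2.1. Your sketch is therefore not comparable to any argument in the paper, but it is a correct outline of how the result is established in those references: construct $\Perf$ by straightening a coCartesian fibration arising from a relative model category, reduce \'etale descent to the affine case, invoke the Amitsur resolution for faithfully flat descent of $\mathrm{QCoh}$, and finally restrict to $\Perf$ using the fppf-local nature of perfectness. Your identification of the construction step as the technically demanding one is also accurate.

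One small imprecision is worth flagging. Proposition~\ref{prop:base} asserts that the restriction functor $\Shv(\cX,\cC) \to \Shv(\cB,\cC)$ is an equivalence of categories of \emph{sheaves}; it does not directly say that a \emph{presheaf} on $\cX$ whose restriction to a basis $\cB$ is a sheaf must already be a sheaf on all of $\cX$. The reduction you intend---checking descent only for \'etale covers of affines by affines---is valid, but it requires as additional input that $\Perf$ already satisfies Zariski descent, so that the presheaf on $\Sch_{/\bC}$ agrees with the canonical extension of its restriction to affines. This is of course the easy case (and is subsumed by the same Amitsur argument), so it does not affect the correctness of your overall strategy.
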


The \'etale topology on $\Sch_{/\bC}$ forms a basis for a topology on $\St_{/\bC}$ which we also call the \'etale topology.  Proposition \ref{prop:base} says that the assignment $X \mapsto \Perf(X)$ makes sense for stacks as well as schemes.  When $X$ is a stack objects of $\Perf(X)$ will be called ``perfect complexes'' on $X$.

\subsection{Background on graphs and ribbon graphs}
\label{sec:pogarg}

\subsubsection{Graphs}

\begin{definition}
\begin{enumerate}
\item
A \emph{topological graph} is a tuple $(X,V_X)$ where $X$ is a locally compact topological space, $V_X \subset X$ is a finite closed subset, and each connected component of the open set $X - V_X$ is homeomorphic to an open interval.  We call the components of $X - V_X$ \emph{edges} of the topological graph.  An edge is \emph{compact} if its closure is compact, and \emph{noncompact} otherwise.
\item A \emph{graph} is a topological graph together with an open embedding $x_e:e \hookrightarrow \bR$ for each edge $e$ of $X$.  We require that the image of each $x_e$ is bounded.
\end{enumerate}
\end{definition}

An edge is a \emph{loop} if it is compact and its closure contains only one vertex, or equivalently if its closure is homeomorphic to a circle.  For simplicity we will assume from now on that our graphs have no loops, but we will allow noncompact and ``multiple'' edges---that is, we allow more than one edge to be incident with the same pair of vertices.

\begin{definition}
Let $(X,V_X)$ be a graph.  A \emph{half-edge} of $X$ incident with a vertex $v \in V_X$ is the germ of a connected component of a deleted neighborhood of $v$ in $X$.  The \emph{degree} of a vertex is the number of half-edges incident with $v$, and a graph is called \emph{locally finite} if each vertex has finite degree.
\end{definition}

If $\epsilon$ is a half-edge of $X$, we write $s(\epsilon)$ for the vertex incident with $\epsilon$ and $t(\epsilon)$ for the edge that contains $\epsilon$.

\begin{definition}
Let $X,V_X$ be a locally finite graph.
\begin{enumerate}

\item A \emph{compact closed walk} in a graph $(X,V_X)$ is a cyclically ordered sequence of edges
$$e_1,\ldots,e_r,e_{r+1} = e_1$$
with the property that for each $i$ the edges $e_i$ and $e_{i+1}$ are incident with the same vertex.  We regard the compact closed walks $(e_1,\ldots,e_r)$ and $(e_2,\ldots,e_r,e_1)$ as equivalent.
\item A \emph{noncompact closed walk} is a totally ordered sequence of edges
$$e_1,\ldots,e_r$$
such that $e_i$ and $e_{i+1}$ are incident with the same vertex, and such that $e_1$ and $e_r$ are noncompact edges of $X$.
\end{enumerate}
A closed walk is \emph{simple} if no vertex is incident with more than two edges belonging to the walk.
\end{definition}

\begin{definition}
Let $(X,V_X)$ and $(Y,V_Y)$ be topological graphs.  A \emph{morphism} $(X,V_X) \to (Y,V_Y)$ is a continuous map $u:X \to Y$ that carries vertices to vertices, that collapses some edges to vertices, and that otherwise maps each remaining edge homeomorphically onto an edge of $Y$.  That is, $u$ satisfies
\begin{itemize}
\item $u(V_X) \subset V_Y$
\item $u^{-1}(V_Y)$ is a union of vertices and edges of $X$
\item $u$ restricted to any edge in $X - u^{-1}(V_Y)$ is a homeomorphism onto an edge of $Y$.
\end{itemize}
If $(X,V_X)$ and $(Y,V_Y)$ are graphs, i.e. topological graphs endowed with coordinates $x_e:e \to \bR$ on their edges, then we say that a morphism $u:X \to Y$ of topological graphs is a morphism of graphs if for each edge $e \subset X$ which maps homeomorphically onto an edge $f \subset Y$, it is of the form $x_f = a x_e + b$ for $a,b \in \bR$.  That is, a morphism of graphs is given by an affine transformation on each edge.
\end{definition}

\begin{remark}
Though our model for them is topological, the theory of graphs and their morphisms as we have defined them is essentially combinatorial.  For instance, every topological graph admits a graph structure, and and two graphs with the same underlying topological graph are isomorphic to each other by a unique isomorphism which fixes the vertices and is homotopic to the identity relative to the vertices.  We will usually abuse notation and suppress the coordinate functions $\{x_e\}$ from the data of a graph $(X,V_X,\{x_e\})$.
\end{remark}

\subsubsection{Cyclically ordered sets}

\begin{definition}
Let $C$ be a finite set.  A \emph{cyclic order} on $C$ is a ternary relation $\cR_C \subset C \times C \times C$ satisfying the following axioms:
\begin{enumerate}
\item If $(x,y,z) \in \cR$ then $(y,z,x) \in \cR$
\item No triple of the form $(x,y,y)$ belongs to $\cR$
\item If $x,y,z$ are all distinct, then $\cR$ contains exactly one of the triples $(x,y,z)$ or $(x,z,y)$.
\item If $(x,y,z) \in \cR$ and $(y,z,w) \in \cR$, then $(x,y,w) \in \cR$ and $(x,z,w) \in \cR$.
\end{enumerate}
\end{definition}

\begin{remark}
Informally, $(x,y,z)$ belongs to $\cR$ if ``when traveling counterclockwise around $C$ starting at $x$, one encounters $y$ before $z$.''
\end{remark}

\begin{remark}
A subset of a cyclically ordered set $C' \subset C$ has a natural cyclic order itself: we set $\cR' = \cR \cap (C' \times C' \times C')$.  We call this the \emph{induced cyclic order} on $C'$.
\end{remark}

\begin{remark}
If $C$ is a cyclically ordered set and $c \in C$, then $C - \{c\}$ has a total ordering defined by $a < b$ if $(c,a,b) \in \cR$.  Write $R(c)$ for the minimal element of the ordered set $C - \{c\}$.  If $C$ has $n$ elements, then the map $c \mapsto R(c)$ has $R^n(c) = c$ and gives $C$ the structure of a $\bZ/n$-torsor.
\end{remark}

\begin{remark}
It will be useful to have the following language.  Let $C$ be a cyclically ordered set.  Then an element of $C \times C$ of the form $(c,R(c))$ is called a \emph{minimal pair}.  A $k$-tuple of the form $(c,Rc,R^2 c,\ldots, R^k c)$ is called a \emph{minimal string} or \emph{minimal $k$-tuple}.  Note that a cyclic order is determined by its set of minimal pairs.
\end{remark}

\begin{definition}
Let $C$ be a cyclically ordered set, and let $C_1$ and $C_2$ be two subsets of $C$.  We say that $C_1$ and $C_2$ are \emph{noninterlacing} if any four distinct elements $x_1,y_1,x_2,y_2$ with $x_1,y_1 \in C_1$ and $x_2,y_2 \in C_2$ satisfy the following condition:
\begin{itemize}
\item either $(x_1,y_1)$ or $(y_1,x_1)$ is a minimal pair of $\{x_1,y_1,x_2,y_2\}$ in its induced cyclic order.
\end{itemize}
\end{definition}

\begin{definition}
\label{def:join}
If $C_1$ and $C_2$ are cyclically ordered sets, and $p \in C_1$, $q \in C_2$ are elements, then there is a unique cyclic order on the set $C_{pq} := (C_1 -\{p\}) \coprod (C_2 - \{q\})$ in which the sets $C_1 - \{p\}$ and $C_2 - \{q\}$ are noninterlacing.  We refer to $C_{pq}$ as the \emph{join} of $C_1$ and $C_2$ along $p$ and $q$.
\end{definition}


\subsubsection{Ribbon graphs}

\begin{definition}
Let $(X,V_X)$ be a graph in which every vertex has degree $\geq 2$.  A \emph{ribbon structure} on $(X,V_X)$ is a collection $\{\cR_v\}_{v \in V}$ where $\cR_v$ is a cyclic order on the set of half-edges incident with $v$.  We call a graph equipped with a ribbon structure a ribbon graph.
\end{definition}

\begin{remark}
A graph without vertices is necessarily homeomorphic to a disjoint union of open intervals.  Such a graph has a unique ribbon structure---the cyclic orders $\{\cR_v\}_{v \in V}$ are indexed by the empty set.
\end{remark}

\begin{remark}
\label{rem:embgraph}
Let $X$ be a graph and let $X \hookrightarrow W$ be an embedding into a surface.  An orientation on $W$ determines a ribbon structure on $X$.
\end{remark}

\begin{definition}
Let $(X,V_X,\{\cR_v\})$ be a ribbon graph with no isolated edges.  A \emph{compact (resp. noncompact) boundary component} of $X$ is a simple compact (resp. noncompact) closed walk with the following property: if $e$ and $f$ are consecutive edges in the walk with common vertex $v$, and associated half-edges $\overline{e}$ and $\overline{f}$ incident with $v$, the $(\overline{e},\overline{f})$ is a minimal pair in the cyclical order $\cR_v$.
\end{definition}

\begin{remark}
If $X$ does have isolated edges, then we include in the set of noncompact boundary components pairs $(e,\mathfrak{o})$ where $e$ is an isolated edge and $\mathfrak{o}$ is an orientation of $e$.
\end{remark}

\begin{remark}
If a closed walk is a boundary component, its reversal is not always a boundary component.  For instance, the reversal of a noncompact boundary component is never a boundary component.  In case a closed walk and its reversal are both boundary components, we regard them as different boundary components.
\end{remark}

\begin{remark}
If $X$ is a compact ribbon graph, write $v(X)$ for the number of vertices, $e(X)$ for the number of edges, $b(X)$ for the number of (necessarily compact) boundary components, and $g(X)$ for the unique integer satisfying
$$v(X) - e(X) + b(X) = 2  - 2g(X)$$
We call $g(X)$ the \emph{genus} of $X$.  Note that we do not define the genus of a noncompact ribbon graph.
\end{remark}

Let $X$ be a \emph{tree}---that is, an acyclic graph---endowed with a ribbon structure.  Since we require each vertex of a ribbon graph to have degree $\geq 2$, a ribbon tree is necessarily noncompact.  We will refer to the noncompact edges of $X$ as \emph{leaves} of the tree, and the remaining edges \emph{internal}.  Contracting an internal edge of the tree
A generalization of the join construction gives us a cyclic order on the set of leaves of any ribbon tree:

\begin{proposition}
\label{prop:jointree}
Let $X$ be a ribbon tree with $n$ leaves, and let $L$ be the set of leaves of $X$.  Define an operator $R:L \to L$ by setting $Re = f$ if there is a noncompact boundary component which begins at $e$ and ends at $f$.  Then $R$ gives $L$ the structure of a $\bZ/n$-torsor.  In particular, the leaves of any ribbon tree are naturally endowed with a cyclic order.
\end{proposition}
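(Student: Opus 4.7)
The plan is to proceed by induction on the number of internal edges of the tree $X$. Along the way I need to check that $R$ is well-defined in the first place, i.e.\ that the boundary walk starting at each leaf $e$ terminates at another leaf. This follows from the fact that the boundary walk is deterministic (at each vertex, the incoming half-edge determines the outgoing one as its successor in the cyclic order), and that in a finite tree no deterministic walk can revisit a directed transition without producing a cycle, which is impossible in a tree.

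For the base case, $X$ has no internal edges. Connectedness leaves two subcases. If $X$ is a single isolated edge, then $n=1$ and $R$ is trivially the identity, which is a $\bZ/1$-torsor. Otherwise $X$ is a star with one vertex $v$ of degree $n$ whose $n$ half-edges are in bijection with the $n$ leaves. A noncompact boundary walk starting at $e$ goes into $v$ and exits at the unique leaf $f$ with $\bar f = R_{\cR_v}(\bar e)$, so $R$ is the successor map of $\cR_v$ transported to $L$, a single $n$-cycle and hence a $\bZ/n$-torsor.

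For the inductive step, I would pick an internal edge $e^*$ of $X$ joining vertices $v_1,v_2$, with associated half-edges $p$ at $v_1$ and $q$ at $v_2$, and let $X'$ be the ribbon tree obtained by contracting $e^*$: merge $v_1,v_2$ into a single vertex $v$ whose cyclic order on half-edges is the join $C_{pq}$ of Definition \ref{def:join}. This operation preserves the tree property and the degree $\geq 2$ condition, leaves the set of leaves unchanged, and reduces the number of internal edges by one, so the inductive hypothesis applies to $X'$. The crux is to identify the map $R$ for $X$ with the map $R'$ for $X'$. Unwinding definitions, a boundary walk in $X$ traverses $e^*$ from $f$ into $v_1$ and out of $v_2$ via $g$ exactly when $(\bar f,p)$ is minimal in $\cR_{v_1}$ and $(q,\bar g)$ is minimal in $\cR_{v_2}$; by construction of the join this is equivalent to $(\bar f,\bar g)$ being minimal in $C_{pq}$. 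Meanwhile, minimal pairs inside $\cR_{v_1}$ or $\cR_{v_2}$ not involving $p$ or $q$ remain minimal in $C_{pq}$. Hence noncompact boundary walks in $X$ correspond bijectively and leaf-for-leaf to those in $X'$, so $R=R'$, and by induction $R$ makes $L$ into a $\bZ/n$-torsor.

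The main obstacle I anticipate is the bookkeeping in the inductive step: one has to verify the claim about boundary walks at $v$ against all four classes of minimal pairs in the joined order (those lying entirely in $C_1-\{p\}$, those lying entirely in $C_2-\{q\}$, and the two crossing types that in $X$ traverse $e^*$ in one direction or the other). The join construction is in effect designed so that these cases exactly reproduce the behaviour of the boundary walk on $X$ at the pair $v_1,v_2$, so once the cases are listed the identification $R=R'$ is immediate; the bulk of the proof is ensuring nothing is missed in this enumeration.
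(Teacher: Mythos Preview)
The paper states this proposition without proof, treating it as background material on ribbon graphs, so there is no argument in the text to compare against.

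Your inductive proof is correct and is the natural combinatorial argument. Contracting an internal edge and invoking the join of Definition~\ref{def:join} is precisely the right move, and your identification of the four classes of minimal pairs in $C_{pq}$ with the four types of local boundary transitions at the pair $v_1,v_2$ (two that stay at one vertex, two that traverse $e^*$ in either direction) is accurate. Two small comments. First, the well-definedness paragraph is slightly informal; it is cleaner to absorb it into the induction, since once you have $R=R'$ and know by hypothesis that every boundary walk in $X'$ terminates at a leaf, the same follows for $X$. Second, you should note in passing that contracting $e^*$ keeps you in the category of ribbon trees with all vertices of degree $\geq 2$ (the merged vertex has degree $d_1+d_2-2\geq 2$, and since $X$ is a tree no loop is created).

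A different, more geometric proof is also available: thicken $X$ to an oriented surface with boundary (this is the standard construction making the ribbon structure into an actual ribbon). Because $X$ is contractible the surface is a disk; the $n$ noncompact leaf-ends become $n$ deleted boundary arcs, and the remaining $n$ arcs of $\partial D^2$, read counterclockwise, are exactly the noncompact boundary components. This exhibits $R$ directly as a single $n$-cycle. Your combinatorial approach has the advantage that it stays entirely within the formalism of cyclic orders and joins already set up in the paper, which is what is needed later when the same induction is used to transport \emph{unwindings} (the $\bZ$-graded version in Section~5) across contractions.
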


\section{CPM of chordal ribbon graphs}

\subsection{Microlocal sheaf theory in one dimension}

In this section we review some of the constructions of microlocal sheaf theory in the case where the base manifold is one dimensional.  

Let $M$ be a one-dimensional manifold (with or without affine structure).  By a \emph{sheaf} we will mean a cochain complex of sheaves of $\bC$-vector spaces.  Such a sheaf is \emph{constructible} if it satisfies the following two conditions:
\begin{enumerate}
\item Each stalk is quasi-isomorphic to a bounded complex of vector spaces.
\item The sheaf is cohomologically locally constant away from a discrete closed subset of $M$.
\end{enumerate}

Constructible sheaves on $M$ form a dg category that we will denote by $\Sh_c(M)$ or usually just by $\Sh(M)$.  Let $\cM$ be the category whose objects are 1-manifolds and whose morphisms are open immersions.  The assignments $M \mapsto \Sh(M)$ together with pullback along open immersions define a sheaf of dg categories on $\cM$---a subsheaf of the ``sheaf of sheaves'' of remark \ref{rem:sheafofsheaves}.  

We can simplify the development of the microlocal theory somewhat by endowing our one-manifolds with affine structures.

\begin{definition}
Let $M$ be a 1-manifold.  An \emph{affine structure} on $M$ is the data of an identification $\psi_U:U \cong (a,b) \subset \bR$ for every sufficiently small connected open subset of $M$, such that the transition maps $\psi_U \circ \psi_V^{-1}$ are of the form $cx + d$ with $c \neq 0$.
\end{definition}

If $M$ and $N$ are affine 1-manifolds then we have evident notions of affine morphisms $M \to N$.  In particular we may speak of affine $\bR$-valued functions and their germs on $M$.  The map $f \mapsto df$ identifies the cotangent bundle $T^*M$ of $M$ with the space of pairs $(x,\xi)$ where $x$ is a point of $M$ and $\xi$ is the germ of an affine $\bR$-valued function with $\xi(x) = 0$.

\begin{definition}[Morse groups/microlocal stalks]
Let $M$ be an affine 1-manifold, let $x$ be a point of $M$ a let $f$ be the germ of an affine $\bR$-valued function on $M$ around $x$.  For $\epsilon > 0$ sufficiently small let $A$ be the sublevel set $\{y \in M \mid y < f(x) + \epsilon\}$ and let $B$ be the sublevel set $\{y \in M \mid y < f(x) - \epsilon\}$.  We define a functor $\mu_{x,f}:\Sh(M) \to \bC\text{-mod}$ to be the cone on the natural map
$$\Gamma(A;F\vert_A) \to \Gamma(B;F\vert_B)$$
Since every constructible sheaf $F$ is locally constant in a deleted neighborhood of $x$, this functor does not depend on $\epsilon$ as long as it is sufficiently small.
\end{definition}

Clearly $\mu_{x,f}$ depends only on $x$ and $df_x$.  When $(x,\xi) \in T^*M$ we let $\mu_{x,\xi}$ denote the functor associated to the point $x$ and the affine function whose derivative at $x$ is $\xi$.

\begin{definition}
For each $F \in \Sh(M)$ we define $\SS(F) \subset T^*M$, the \emph{singular support} of $F$, to be the closure of the set of all $(x,\xi) \in T^*M$ such that $\mu_{x,\xi} F \neq 0$.
\end{definition}

As $\mu_{x,\xi} = \mu_{x,t\cdot \xi}$ when $t > 0$, the set $\SS(F)$ is \emph{conical}; that is, if $(x,\xi) \in \SS(F)$ and $t \in \bR_{>0}$, then $(x,t \cdot \xi) \in \SS(F)$.  If $F$ is locally constant away from points $\{x_i\}_{i \in I}$, then $\SS(F)$ is 
contained in the union of the zero section and the vertical cotangent spaces $T_{x_i}^* M$.  In particular $\SS(F)$ is 1-dimensional and therefore a Lagrangian subset of $T^*M$ with its usual symplectic form---we say that $\SS(F)$ is a \emph{conical Lagrangian} in $T^* M$.

\begin{definition}
Suppose $\Lambda \subset T^*M$ is a conical Lagrangian.  Define $\Sh(M,\Lambda) \subset \Sh(M)$ to be the full triangulated subcategory of sheaves with $\SS(F) \subset \Lambda$.
\end{definition}

\begin{example}
Let $\Lambda = M \cup T_{s_1}^*M \cup \cdots \cup T_{s_n}^* M$ be the union of the zero section and the cotangent spaces of finitely many points $\{s_1,\ldots,s_n\}$.  Then $\Sh(M,\Lambda)$ is the category of sheaves that are locally constant away from $\{s_1,\ldots,s_n\}$.


\end{example}

\subsubsection{Microlocalization}
\label{subsubsec:micro}

One expects that a sheaf $F$ on $M$ has a local nature on $T^* M$ as well as on $M$, and that the singular support of $F.$  Kashiwara and Schapira formulate this by constructing for each open subset $U \subset T^*M$ a category $D^b(M;U)$ and a restriction functor from constructible sheaves on $M$ to $D^b(M;U)$.  If $F$ has singular support $\Lambda$ then the image of $F$ in $D^b(M;T^*M - \Lambda)$ is zero.

\begin{definition}[{\cite[Section 6.1]{KS}}]
Let $M$ be a one-dimensional manifold and let $U \subset T^*M$ be an open subset.  The category $D^b(M;U)$ of \emph{microlocal sheaves on $U$} is the Verdier quotient of $D^b(M)$ by the thick subcategory of all sheaves $F$ with $\SS(F) \cap U = \varnothing$.
\end{definition}

\begin{remark}
We will only make use of this construction when $U$ is conical, i.e. invariant under the action of $\bR_{>0}$.
\end{remark}

There is a dg enhancement of this construction, using for instance Drinfeld's localization construction \cite{Dr}, giving us a presheaf of dg categories on $T^* M$.  We define $\MSh$ to be the sheafification of this presheaf, and write $\MSh(M,U)$ for the value of $\MSh$ on an open subset $U \subset M$.

\begin{remark}
For each open subset $V \subset M$, we have a tautological equivalence $D^b(V) \to D^b(V;\pi^{-1}(V))$---the dg version of these assemble to a map $\Sh \to \pi_*\MSh$ in $\Shv(M;\dgCat)$. 
By definition this map induces an equivalence on stalks, and is therefore an equivalence of sheaves.  It follows that, whenever $U$ contains its projection $\pi(U)$ to the zero section of $M$,  $\MSh(M,U)$ is just the usual category of sheaves on $\pi(U)$.
\end{remark}

\begin{remark}
Fix a conical Lagrangian $\Lambda \subset T^*M$.  We can compute the full subcategory $\MSh(M,U;\Lambda) \subset \MSh(M,U)$ whose singular support is in $\Lambda \cap U$ very explicitly:
\begin{enumerate}
\item If $U$ is of the form $\pi^{-1}(I)$ where $I \subset M$ is an open set, then the restriction functor
$\Sh(M;\Lambda) \to \Sh(I;\Lambda \cap \pi^{-1}(I))$ induces an equivalence $\MSh(U;\Lambda) \cong \Sh(I;\Lambda \cap \pi^{-1}(I))$.
\item If $U$ does not contain the zero section and its intersection with $\Lambda$ consists of the vertical segments $(x_1,\bR_{> 0} \cdot \xi_1), \ldots, (x_n,\bR_{>0} \cdot \xi_n)$ then the microlocal stalk functors assemble to a map $\Sh(M;\Lambda) \stackrel{\bigoplus \mu_{x_i,\xi_i}}{\longrightarrow} \bigoplus_{i = 1}^n \bC\text{-mod}$ that induces an equivalence $\MSh(U;\Lambda) \cong \bigoplus_{i=1}^n \bC\text{-mod}$.
\end{enumerate}

\end{remark}





\subsubsection{Contact transformations}
\label{sec:cont}

\begin{definition}
Let $M_1$ and $M_2$ be one-dimensional manifolds, and let $U_1\subset T^*M_1$ and $U_2 \subset T^*M_2$ be conical open subsets.  A \emph{contact transformation} from $U_1$ to $U_2$ is an open immersion $f:U_1 \to U_2$ satisfying the following properties:
\begin{enumerate}
\item $f^* \omega_2 \cong \omega_1$, where  $\omega_i$ is the natural symplectic form on $T^* M_i$
\item $f$ is equivariant for the $\bR_{>0}$-actions on $U_i$, i.e. $f(t\cdot u) = t \cdot f(u)$ for $t \in \bR_{>0}$. 
\end{enumerate}
\end{definition}

If we endow $T^* \bR$ with the coordinates $(x,\xi)$ with $\xi(x) = 1$, and $U_1$ and $U_2$ are open subsets of $T^*\bR$, then any contact transformation is of the form $(x,\xi) \mapsto (f(x),\xi/f'(x))$ for some smooth function $f$.  We say that a contact transformation is \emph{affine} if $f$ is affine. More generally, if $f:M \to N$ is an affine map between affine 1-manifolds, we let $C_f$ denote the associated contact transformation, which in local coordinates as above looks like $(f(x),\xi/f'(x))$.  If $U \subset T^*M$ is connected, or more generally if the projection map $U \to M$ is injective on connected components, then every contact transformation is of the form $C_f$.

\begin{remark}
The simple structure of our contact transformations is a special feature of one-dimensional manifolds and their cotangent bundles.  A general contact transformation need not be induced by a map between the base manifolds.
\end{remark}


We define a category $\Cont$ in the following way:
\begin{itemize}
\item The objects of $\Cont$ are triples $(M,U,\Lambda)$ where $M$ is an affine 1-manifold,  $U \subset T^*M$ is a conical open set that, and $\Lambda \subset T^*M$ is a conical Lagrangian set.  
We furthermore assume that, if $\pi:T^*M \to M$ denotes the projection, that $\pi(U) = M$.  
\item $\Hom((M_1,U_1,\Lambda_1),(M_2,U_2,\Lambda_2))$ is the set of affine contact transformations $U_1 \to U_2$ that carry $\Lambda_1$ homeomorphically onto an open subset of $\Lambda_2 \cap U_2$.
\end{itemize} 
We often abuse notation and write $(U,\Lambda)$ instead $(M,U,\Lambda)$ for an object of $\Cont$, suppressing the zero section $M$ of the ambient cotangent manifold.

\begin{remark}
\label{rem:forgetcontact}
It is easily checked that every object $(M,U,\Lambda)$ of $\Cont$ is isomorphic to an object $(M',U',\Lambda)$ for which the projection $U' \to M$ induces a bijection on connected components.  We therefore have up to unique isomorphism a ``forgetful'' functor $\Cont \to \cM$ that sends a tuple $(M,U,\Lambda)$ with $\pi_0(U) \stackrel{\sim}{\to} \pi_0(M)$ to the tuple  $(M,U,\Lambda) \mapsto M$ that sends a contact transformation of the form $C_f$ to $f$.
\end{remark}

We endow $\Cont$ with a Grothendieck topology by letting $\{(M_i,U_i,\Lambda_i)\}$ be a covering sieve of $(M,U,\Lambda)$ if the $U_i$ cover $U$.  The theory of \cite[Chapter 7]{KS} may be used to extend the sheaf $\MSh$ on each cotangent bundle $T^*M$ to a sheaf on the category $\Cont$.  In our 1-dimensional, affine setting this can be somewhat simplified---we have no need to appeal to sheaf operations defined by microlocal kernels.

\begin{enumerate}
\item The functor $\cM \to \dgCat:M \mapsto \Sh(M)$ pulls back to a functor $\Cont \to \dgCat$ along the ``forgetful'' map $\Cont \to \cM$ of Remark \ref{rem:forgetcontact}.  Let us call this functor $\cP$.
\item If we let $\cP'(M,U,\Lambda) \subset \cP(M,U,\Lambda) = \Sh(M)$ denote the full subcategory of sheaves $F$ with $\SS(F) \cap U = \varnothing$ then the assigment $(M,U,\Lambda) \mapsto \cP'$ defines a full subfunctor $\cP' \subset \cP$.
\item The quotient construction gives us a presheaf $\cP/\cP'$ on $\Cont$.  This functor has a further subfunctor $\cP'' \subset \cP/\cP'$ with $\cP''(M,U,\Lambda)$ given by the full subcategory spanned by sheaves $F$ with $\SS(F) \subset \Lambda$.
\item We let $\MSh$ denote the sheafification of the presheaf $\cP''$.
\end{enumerate}

If we restrict $\MSh:\Cont \to \dgCat$ to $\cM$ along the functor $\cM \to \Cont:(M,\Lambda) \mapsto (M,T^*M,\Lambda)$, we get the functor called $\MSh$ in section \ref{subsubsec:micro}.

\subsection{Chordal ribbon graphs}

In this section we will consider a special class of ribbon graphs.  

\begin{definition}
\label{def:chordal}
A \emph{chordal ribbon graph} is a pair $(X,Z)$ where
\begin{itemize}
\item $X$ is a ribbon graph.
\item $Z$ is a closed, bivalent subgraph containing each vertex of $X$.
\item Let $\cR_v$ denote the ternary relation defining the cyclic order on the set of half-edges incident with $v$.  If $e$ and $f$ are the two half-edges of $Z$ incident with $v$, then there is at most one half-edge $g$ so that $(e,g,f) \in \cR_v$ and at most one half-edge $h$ so that $(f,h,e) \in \cR_v$.
\end{itemize}
In particular, the last condition requires that each vertex of a chordal ribbon graph has degree at most 4.  We refer to $Z$ as the \emph{zero section} of the chordal ribbon graph.
\end{definition}

\begin{remark}
If $X$ is compact, the subset $Z$ is a disjoint collection of circles.  These circles are joined by edges of $X$ that we might call ``chords.''  Chordal ribbon graphs are similar to the ``chord diagrams'' and ``string diagrams'' of Chas and Sullivan \cite{Sullivan}.
\end{remark}

Let $\Fishn$ denote the category whose objects are chordal ribbon graphs, and where $\Hom((U,W),(X,Z))$ is given by the set of open immersions $j:U \hookrightarrow X$ with $j(W) \subset Z$ and preserving the cyclic orders at each vertex.  We endow $\Fishn$ with a Grothendieck topology in the evident way.

\begin{example}
\label{ex:fishnet1}
Let $e$ be a ribbon graph with no vertices and one edge.  We may endow it with two non-isomorphic chordal structures: one in which the zero section is empty and one in which the zero section is all of $e$.
\end{example}

\begin{example}
\label{ex:fishbone}
Let $M$ be an affine 1-manifold and $\Lambda \subset T^*M$ a conical Lagrangian subset.  We may regard $\Lambda$ as a graph by letting the affine structure on $M$ induce coordinate functions on those edges of $\Lambda$ contained in the zero section, and giving the rest of edges arbitrary some fixed coordinate $(x_0,\xi) \mapsto 1/(1+ |\xi|^2)$.  

$\Lambda$ has a ribbon structure coming from the orientation of $T^*M$.  If $\Lambda$ contains the zero section $M \subset T^*M$ then the pair $(\Lambda,M)$ is a chordal ribbon graph.  Slightly more generally, if $U \subset T^*M$ is a conical open subset then $(\Lambda \cap U,U \cap M)$ is also a chordal ribbon graph with possibly empty zero section.  We refer to chordal ribbon graphs that arise in this way as \emph{fishbones}.
\end{example}

In $\Fishn$, the full subcategory of fishbones is a basis for the Grothendieck topology.  Suppose $(\Lambda_1 \cap U_1,U_1 \cap M_1)$ and $(\Lambda_2 \cap U_2,U_2 \cap M_2)$ are two fishbones arising from $(M_1,U_1,\Lambda_1)$ and $(M_2,U_2,\Lambda_2)$.  Each morphism of chordal ribbon graphs induces a morphism $(M_1,U_1,\Lambda_1) \to (M_2,U_2,\Lambda_2)$ in $\Cont$, and this functor respects the Grothendieck topologies.  By Proposition \ref{prop:base}, this shows that $\MSh \in \Shv(\Cont,\dgCat)$ determines a sheaf of categories $\CPM$ on $\Fishn$. 

\begin{definition}
We let $\CPM:\Fishn \to \dgCat$ denote the sheaf of dg categories on $\Fishn$ whose restriction to $\Cont$ is given by $\MSh$ of Section \ref{sec:cont}.  We call $\CPM(X,Z)$ the \emph{constructible plumbing model} of the chordal
ribbon graph $(X,Z)$.
\end{definition}

\subsection{Quiver descriptions of microlocal categories}

Let $M$ be a one-dimensional manifold and let $\Lambda \subset T^* M$ be a conic Lagrangian containing the zero section.  The category $\Sh(M;\Lambda)$ can be described very concretely in terms of representations of quivers, which we will recall in this section.  We also discuss the Bernstein-Gelfand-Ponomoraev equivalences that have some relevance for us.

Let us call connected components of $\Lambda - M$ the \emph{spokes} of $\Lambda$.  They are divided into two groups depending on which component of $T^*M - M$ they fall into.  Using an orientation of $M$ we may label these groups ``upward'' and ``downward.''

The conic Lagrangian $\Lambda$ determines a partition $P_\Lambda$ of $M$ into subintervals (which may be open, half-open, or closed) and points.  Let us describe this partition in case $M = \bR$, the general case is similar.  Each spoke of $\Lambda$ is incident with a point $x \in \bR$, which we may order $x_1 < \ldots < x_k$.  We put $\{x_i\} \in P_\Lambda$ if $x_i$ is incident with both an upward and a downward spoke.  We put an interval $I$ from $x_i$ to $x_{i+1}$ in $\cP_\Lambda$ whose boundary conditions are determined by the following rules
\begin{itemize}
\item If $x_i$ is incident with an upward spoke but not incident with a downward spoke, then $x_i$ is included in $I$.  Otherwise $x_i$ is not included in $I$.
\item If $x_{i+1}$ is incident with a downward spoke but not incident with an upward spoke, then $x_{i+1}$ is included in $I$.  Otherwise $x_{i+1}$ is not included in $I$.
\end{itemize}
We put $(-\infty,x_1)$ in $P_\Lambda$ if $x_1$ is incident with an upward spoke and $(-\infty,x_1]$ in $P_\Lambda$ if $x_1$ is incident with a downward spoke, and similarly we put $(x_k,\infty)$ (resp. $[x_k,\infty)$) in $P_\Lambda$ if $x_k$ is incident with a downward (resp. upward) spoke.

Define a quiver (that is, directed graph) $Q_\Lambda$ whose vertices are the elements of $P_\Lambda$ and with and edge joining $I$ to $J$ (in that orientation) if the closure of $J$ has nonempty intersection with $I$.  If there are $n$ spokes then this is a quiver of type $A_{n+1}$ (i.e. shaped like the Dynkin diagram $A_{n+1}$) whose edges are in natural bijection with the spokes of $\Lambda$: an upward spoke corresponds to a left-pointing arrow and a downward spoke to a right-pointing arrow.

\begin{theorem}
\label{thm:quiverquiver}
There is a natural equivalence of dg categories
$$\Sh(M;\Lambda) \cong \Rep(Q_\Lambda)$$
If $(x,\xi)$ belongs to a spoke of $\Lambda$ corresponding to an arrow $f$ of $Q_\Lambda$, then under this equivalence the functor $\mu_{x,\xi}$ intertwines with the functor ${\mathsf Cone}(f)$.
\end{theorem}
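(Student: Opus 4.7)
The plan is to prove the equivalence by identifying $\Sh(M;\Lambda)$ with the category of sheaves that are constructible with respect to the stratification underlying $P_\Lambda$, and then applying a standard quiver description of such sheaves. First, I would show that a constructible sheaf $F$ on $M$ satisfies $\SS(F)\subset\Lambda$ if and only if $F|_I$ is locally constant for each stratum $I\in P_\Lambda$, in a manner compatible with the open/closed boundary conventions built into the definition of $P_\Lambda$. The key point is that those boundary conventions precisely record which microlocal stalks $\mu_{x,\xi}$ are required to vanish at each partition point: at a point $x_i$ incident only with an upward spoke, the condition $(x_i,\xi)\notin\Lambda$ for the downward covector translates, via the defining cone of sublevel-set restriction maps, to the statement that the generalization map from the stalk at $x_i$ to the stalk just to the right of $x_i$ is a quasi-isomorphism, i.e.\ $x_i$ is absorbed into the right-adjacent interval. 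The other three cases (both spokes present, only a downward spoke, or neither) are analogous.

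With $\SS(F)\subset\Lambda$ thus translated into constructibility for $P_\Lambda$, the second step is to identify this category of constructible sheaves with $\Rep(Q_\Lambda)$. Each $I\in P_\Lambda$ is contractible, so $F|_I$ is determined up to quasi-isomorphism by a single complex of vector spaces, which is the data assigned to the vertex $I$ of $Q_\Lambda$; each arrow $I\to J$, which by construction records a closure incidence $\overline{J}\cap I\neq\varnothing$, provides the structure map coming from the generalization of $F$ from $I$ to $J$. To upgrade this to an equivalence of dg categories, I would produce explicit compact generators --- e.g.\ the extension by zero of the constant sheaf on each stratum $I$ --- and verify directly that their total morphism complex is quasi-isomorphic to the path algebra of $Q_\Lambda$ (which in type $A_{n+1}$ is formal and has a transparent presentation).

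The microlocal compatibility is then essentially built into the construction. Given $(x,\xi)$ in the spoke of $\Lambda$ corresponding to an arrow $f:I\to J$ of $Q_\Lambda$, the sublevel sets $A\supset B$ defining $\mu_{x,\xi}$ differ exactly by a neighborhood of $x$ that crosses between the strata $I$ and $J$. Using the local constancy of $F$ on each stratum, the complexes $\Gamma(A;F|_A)$ and $\Gamma(B;F|_B)$ compute the values of the associated representation at the vertices $I$ and $J$ up to quasi-isomorphism, and the restriction map between them is, up to the orientation conventions relating upward/downward spokes to left/right-pointing arrows, the structure map $f$; hence the cone is $\Cone(f)$ as claimed.

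The main obstacle is the first step: carefully matching the microlocal vanishing conditions with the boundary-inclusion rules defining $P_\Lambda$. Each of the four cases at a partition point is individually elementary, but one must be consistent about the sign and orientation conventions --- in particular, which direction an ``upward'' covector points and how this determines which side of $x_i$ gets absorbed into an adjacent interval --- in order for the final quiver to come out with the arrow orientations prescribed by the statement.
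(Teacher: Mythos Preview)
Your proposal is correct and matches the paper's approach: the paper's own proof is in fact only a one-line fragment (``The equivalence sends the constant sheaf on $I\subset M$ (concentrated in degree zero and with\ldots''), which was evidently going to do exactly what you outline in your second step---define the functor on the generating sheaves $\bC_I$ indexed by the pieces of $P_\Lambda$ and check it is an equivalence. Your write-up is therefore more complete than what appears in the paper.

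One small imprecision worth flagging: the ``extension by zero of the constant sheaf on each stratum $I$'' gives the \emph{simple} objects of $\Sh(M;\Lambda)$, and the Ext-algebra of the direct sum of simples is not literally the path algebra of $Q_\Lambda$ but its Koszul dual (for a linear $A_{n+1}$ quiver this is the same quiver with the arrows placed in degree~$1$). So the endomorphism dg algebra you compute will not be the path algebra on the nose. This is harmless---the two algebras are derived Morita equivalent and both are formal---but if you want the path algebra to appear directly you should instead take, for each $I\in P_\Lambda$, the constant sheaf supported on the interval obtained as the union of $I$ with all strata $J$ reachable from $I$ along arrows of $Q_\Lambda$; these are exactly the indecomposable projectives of $\Rep(Q_\Lambda)$. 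The paper's fragment seems to have been heading toward specifying these objects. Your microlocal-stalk computation and the case analysis in step one are fine as stated.
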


\begin{proof}
The equivalence sends the constant sheaf on $I \subset M$ 
(concentrated in degree zero and with

\end{proof}

\begin{example}
Let $\raise3pt\hbox{$\bot$}\!\!\raise0.36pt\hbox{+}\!\!\hbox{\lower4.5pt\hbox{$\top$}}\subset T^*\bR$
be the union of the zero section, the fiber at $0,$ an upward spoke at some $x_- <0$ and a downward spoke at some $x_+>0.$  Then
$$
\Sh(\bR,\raise3pt\hbox{$\bot$}\!\!\raise0.36pt\hbox{+}\!\!\hbox{\lower4.5pt\hbox{$\top$}})\cong Rep( \bullet \leftarrow \bullet \leftarrow \bullet \rightarrow \bullet \rightarrow \bullet).
$$
\end{example}

For a general quiver $Q$, if $a$ is an arrow let $s(a)$ and $t(a)$ denote the source and target of $a$, respectively.  A vertex $v$ of $Q$ is called a sink (resp. source) if all the arrows incident to it have $t(a) = v$ (resp. $s(a) = v$).  If $x$ is a sink or a source, then Bernstein-Gelfand-Ponomaraev \cite{BGP} define a new quiver $S_x Q$ obtained by reversing the orientation of all the arrows in $Q$ incident to $x$.

\begin{theorem}[Bernstein-Gelfand-Ponomaraev \cite{BGP}]
Let $Q$ be a quiver, and let $x \in Q$ be a sink or a source.  Then there is an equivalence of dg categories
$$\Rep(Q) \cong \Rep(S_x Q)$$
If $Q_1$ and $Q_2$ are quivers with same underlying undirected graph, then $\Rep(Q_1) \cong \Rep(Q_2)$.  
\end{theorem}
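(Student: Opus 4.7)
The plan is to establish the first assertion by constructing an explicit pair of quasi-inverse dg functors (reflection functors) attached to a sink or source, and then to deduce the orientation-independence statement by iterating. The argument is essentially the classical one of Bernstein-Gelfand-Ponomaraev, upgraded from an equivalence of quotient abelian categories to an honest equivalence of dg categories by replacing kernels and cokernels with homotopy fibers and cofibers.

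I would define the reflection functor at a sink $x \in Q$ as follows. Let $a_1,\ldots,a_k$ be the arrows of $Q$ incident to $x$, all pointing into $x$ since $x$ is a sink. Given $V \in \Rep(Q)$, set $(S_x^+ V)_y = V_y$ for every vertex $y \neq x$ of $Q$ (and of $S_x Q$), and put
$$(S_x^+ V)_x \;=\; \mathrm{Cone}\Bigl(\bigoplus_{i=1}^k V_{s(a_i)} \xrightarrow{\oplus V(a_i)} V_x\Bigr)[-1],$$
with the new arrows at $x$ in $S_x Q$ (now pointing out of $x$) given by the tautological projections of this homotopy fiber onto each summand $V_{s(a_i)}$; arrows not incident to $x$ act as before. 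Dually, at a source I would define $S_x^-$ using the homotopy cofiber of the map $V_x \to \bigoplus V_{t(a_i)}$. These constructions extend to dg functors by realizing each as tensoring with an explicit $kQ$-$kS_xQ$-bimodule complex, i.e.\ a tilting complex in the sense of Rickard, and hence they make sense in the dg enhancement used in the paper.

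The verification that $S_x^\pm$ are mutually quasi-inverse is local at $x$: off $x$, both compositions are the identity on the nose, while at $x$ the double reflection collapses to the original object by the defining fiber sequence $W_x \to \bigoplus V_{s(a_i)} \to V_x$ (where $W_x = (S_x^+ V)_x$), whose cone is canonically $V_x$. A quick check using the octahedral axiom confirms that the structure maps transport correctly, yielding natural equivalences $S_x^- S_x^+ \simeq \mathrm{id}$ and $S_x^+ S_x^- \simeq \mathrm{id}$.

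For the second assertion, I would invoke the classical combinatorial fact that any two orientations of a tree are connected by a finite sequence of sink-source reversals (an easy induction on the number of differently oriented edges, using that in a tree any edge oriented "against the flow" can be propagated to a leaf by reflections). Since in all cases relevant to this paper the underlying graph is of type $A_{n+1}$ and hence a tree, composing finitely many reflection equivalences from the first part yields $\Rep(Q_1) \cong \Rep(Q_2)$. The main technical obstacle throughout is promoting the reflection functors from a triangulated to a genuine dg equivalence, rather than merely mutually inverse autoequivalences of homotopy categories; this is handled by the bimodule formalism above, though for a clean write-up it would be expedient simply to cite Rickard's tilting theorem or Keller's dg refinement thereof.
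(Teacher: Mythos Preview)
The paper does not supply a proof of this theorem at all; it is simply quoted from \cite{BGP} as a known result and then used to deduce Corollary~\ref{cor:corcorcor}. So there is nothing to compare your argument against except the original source, and your sketch is precisely the classical BGP reflection-functor argument, correctly upgraded to the dg setting by replacing kernels/cokernels with homotopy fibers/cofibers (equivalently, by packaging $S_x^\pm$ as a tilting bimodule). That part is fine.

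One substantive remark on the second assertion. You correctly observe that the implication ``same underlying graph $\Rightarrow$ derived equivalent'' only follows from iterated sink/source reflections when the underlying graph is a tree, and you justify restricting to type $A_{n+1}$. But note that the paper actually applies Corollary~\ref{cor:corcorcor} when $M = S^1$ (in the proof of Theorem~\ref{thm:BB}), where the associated quiver is a \emph{cycle}, not a tree. On a cycle, two orientations are connected by sink/source reflections if and only if they have the same number of clockwise and counterclockwise arrows---and this is exactly the hypothesis ``equal number of spokes in each component of $T^*M - M$'' in the Corollary. So your tree argument covers the $A_{n+1}$ case cleanly, but to match the paper's full use of the result you should also note the (equally elementary) combinatorial fact for cycles with fixed arrow counts in each direction. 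The general claim ``same underlying undirected graph $\Rightarrow$ $\Rep(Q_1)\cong\Rep(Q_2)$'' is in fact false without such a hypothesis once cycles are present, so the theorem as stated in the paper is slightly imprecise; your caution here is warranted.
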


Applying this theorem to the quivers obtained from fishbones we obtain the following.

\begin{corollary}
\label{cor:corcorcor}
Let $M$ be a one-dimensional manifold and let $\Lambda_1$ and $\Lambda_2$ be conic Lagrangians in $T^*M$.  Suppose that in each connected component $U$ of $T^*M - M$, $\Lambda_1 \cap U$ and $\Lambda_2 \cap U$ have an equal number of components (i.e. $\Lambda_1$ and $\Lambda_2$ have an equal number of spokes in each group.)  Then $\Sh(M;\Lambda_1) \cong \Sh(M;\Lambda_2)$.
\end{corollary}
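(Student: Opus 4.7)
The plan is a two-step reduction: first use Theorem~\ref{thm:quiverquiver} to translate the problem into one about quiver representations, then apply the Bernstein--Gelfand--Ponomaraev theorem to finish. By Theorem~\ref{thm:quiverquiver} we have canonical dg equivalences
$$\Sh(M;\Lambda_i)\;\cong\;\Rep(Q_{\Lambda_i}),\qquad i=1,2,$$
so it suffices to produce an equivalence $\Rep(Q_{\Lambda_1})\cong\Rep(Q_{\Lambda_2})$.

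Next I would verify that $Q_{\Lambda_1}$ and $Q_{\Lambda_2}$ have the same underlying undirected graph. The construction of $Q_\Lambda$ described before Theorem~\ref{thm:quiverquiver} is local on the connected components of $M$: over each component it produces a chain (in the case $M\cong\bR$) or a cycle (in the case $M\cong S^1$) whose arrows are in natural bijection with the spokes of $\Lambda$ lying above that component, with upward and downward spokes contributing arrows of opposite orientation. Forgetting orientations, the underlying undirected graph above a given component of $M$ is therefore determined purely by the \emph{total} number of spokes above it. Each component of $M$ sits underneath exactly two components of $T^*M-M$ (one in each half-plane), so the assumption $|\pi_0(\Lambda_1\cap U)|=|\pi_0(\Lambda_2\cap U)|$ for every component $U$ of $T^*M-M$ forces the per-component spoke totals of $\Lambda_1$ and $\Lambda_2$ to match. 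Consequently $Q_{\Lambda_1}$ and $Q_{\Lambda_2}$ have the same underlying undirected graph, and the second assertion of the BGP theorem then delivers $\Rep(Q_{\Lambda_1})\cong\Rep(Q_{\Lambda_2})$.

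The main delicacy is the case in which some component of $M$ is a circle: then the underlying graph is a cycle, and the elementary BGP reflection functors at sinks or sources do not by themselves reach every orientation (a fully cyclic orientation has no sink and no source). Here the stronger per-component hypothesis is doing extra work: it matches the counts of ``clockwise'' versus ``counterclockwise'' arrows, which is the extra discrete invariant that distinguishes derived-equivalence classes of orientations of a cycle. This refinement is absorbed into the blanket statement of the quoted BGP theorem, which asserts the equivalence for any two quivers sharing an underlying undirected graph, so no additional argument is required beyond the bookkeeping above.
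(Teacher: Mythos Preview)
Your proof is correct and follows the paper's approach exactly: the paper's own proof is the single sentence ``Applying this theorem to the quivers obtained from fishbones we obtain the following,'' and you have spelled out precisely those details (reduce via Theorem~\ref{thm:quiverquiver}, check that the underlying undirected graphs agree, invoke the stated BGP equivalence). Your third paragraph, observing that for a circle component the per-half-plane hypothesis is what matches the clockwise versus counterclockwise arrow counts, is a helpful refinement beyond what the paper makes explicit.
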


\section{Dualizable Ribbon Graphs, Beilinson-Bondal, and HMS}

Given a chordal ribbon graph $(X,Z)$, we may form a graph $B$ in the following way: the vertices are the connected components of $Z$, and we draw an edge from a component $Z_1$ to a component $Z_2$ if there exists an edge of $X$ joining a vertex of $Z_1$ to a vertex of $Z_2$.  If there are noncompact edges incident with a vertex of $Z_1$ but no other vertex, then we add a noncompact edge to $B$ connected to the component $Z_1$.  By construction, there is a map from $X$ to $B$ which collapses each component $Z_i$ to a vertex.

\begin{definition}
\label{def:B}
A chordal ribbon graph $(X,Z)$ is called \emph{dualizable} if each vertex of the associated graph $B$ has degree 2.
\end{definition}

As discussed in the Introduction, dualizability is a combinatorial analogue of a torus fibration with a section.
In this section we show that $\CPM$ of a dualizable ribbon graph is equivalent to perfect complexes on a one-dimensional variety or orbifold---a chain of weighted projective lines (``balloons'') indexed by the vertices of $B$.

\subsection{tcnc stacks}
\label{sec:tcnc}

We shall introduce two classes of proper one-dimensional stacks with toric components, called respectively ``balloon chains'' and ``balloon rings,'' which arise as mirror partners of dualizable ribbon graphs.  We refer to Deligne-Mumford stacks of any of these two kinds as ``one-dimensional stacks with toric components and normal crossings'' or ``one-dimensional tcnc stacks'' for short.

\begin{definition}
\label{def:stacks}
For $a \in \bZ_{\geq 1}$, Let $\mu_a \subset \bC^*$ be the multiplicative group of the $a$th roots of unity.
\begin{itemize}
\item We let $B(a) = [\Spec(\bC)/\mu_a]$ denote the classfying stack of $\mu_a$.
\item Let $\bC[T]$ be a polynomial ring with coordinate variable $T$.  We let $U(a) = [\Spec(\bC[T])/\mu_a]$ denote the quotient stack associated to the action of $\mu_a$ on $\bC[T]$ given 
$$\zeta \cdot T = \zeta^{-1} T$$
\item Let $\bC[T,V]/TV$ denote the coordinate ring of the $T$- and $V$-axes in the $TV$-plane.  Let $X(a) = [\Spec(\bC[T,V]/TV)/\mu_a]$ denote the quotient stack associated to the action of $\mu_a$ on $\bC[T,V]/TV$ given by
$$\zeta \cdot T = \zeta T \qquad \zeta \cdot V = \zeta^{-1} V$$
\end{itemize}
\end{definition}

\begin{remark}
We have open inclusions
$$
\begin{array}{c}
U(a) \supset [\Spec(\bC[T,T^{-1}])/\mu_a] \\
X(a) \supset [(\Spec(\bC[T,T^{-1},V,V^{-1}])/TV)/\mu_a]
\end{array}
$$
As $\mu_a$ acts freely on $\bC - \{0\}$, we may identify these open subsets with affine schemes $\Spec(\bC[T^a,T^{-a}])$ and $\Spec(\bC[T^a,T^{-a}]) \coprod \Spec(\bC[V^a,V^{-a}])$.
\end{remark}

\begin{remark}
\label{rem:maps}
The stacks $B(1)$, $U(1)$, and $X(1)$ are representable, i.e. they are ordinary varieties.  There are \'etale maps $B(1) \to B(a)$, $U(1) \to U(a)$, and $X(1) \to X(a)$, as well as projections to coarse moduli spaces $B(a) \to B(1)$, $U(a) \to U(1)$, and $X(a) \to X(1)$.  The compositions $U(1) \to U(a) \to U(1)$ etc. are the GIT quotient maps for the actions of $\mu_a$.
\end{remark}

\begin{definition}
\label{def:balloon}
A \emph{balloon} with indices $(a_1,a_2)$ is the pushout of the diagram
$$
\xymatrix{
& \bC - \{0\} \ar[rd]^{z \mapsto T^{a_1}} \ar[ld]_{z \mapsto T^{a_2}} & \\
U(a_1) & & U(a_2)
}
$$
in the 2-category of Deligne-Mumford stacks.  That is, a balloon is a weighted projective line that is generically representable.  We will define ``balloon chains'' and ``balloon rings'' by gluing together balloons at their orbifold points.
\begin{enumerate}
\item
For $n \geq 2$, a \emph{balloon chain} with indices $(a_1,\ldots,a_n)$ is the pushout of the diagram
$$
\xymatrix{
& \bC - \{0\} \ar[ld] \ar[rd] & & \cdots \ar[ld] \ar[rd] & & \bC - \{0\} \ar[dl] \ar[dr] \\
U(a_1) & & X(a_2) & & X(a_{n-1}) &  & U(a_n)
}
$$ 
\item
For $n \geq 2$, a \emph{balloon ring} with indices $(a_1,\ldots,a_n)$ is the pushout of the diagram
$$
\xymatrix{
& \bC - \{0\} \ar[ld] \ar[rd] & & \cdots \ar[ld] \ar[rd] & & \bC - \{0\} \ar[dl] \ar[dlllll]  \\
X(a_1) & & X(a_2) & & X(a_n) 
}
$$ 

\end{enumerate}
If $A = (a_1,\ldots,a_n)$ is an $n$-tuple of positive integers, we will denote by $C(A)$ the balloon chain with indices $A$ and by $R(A)$ the balloon ring with indices $A$.
\end{definition}

\begin{remark}
If $A = (a_1,\ldots,a_n)$ is an $n$-tuple of positive integers, let $\tilde{C}(A)$ (resp. $\tilde{R}(A)$) be the disjoint union of balloons $C(a_1,a_2) \amalg \cdots \amalg C(a_{n-1},a_n)$ (resp. $C(a_1,a_2) \amalg \cdots \amalg C(a_{n-1},a_n) \amalg C(a_n,a_1)$.  There are maps $\tilde{C}(A) \to C(A)$ and $\tilde{R}(A) \to R(A)$ that exhibit $\tilde{C}$ and $\tilde{R}$ as the normalization of the Deligne-Mumford stacks $C(A)$ and $R(A)$.

\end{remark}

\subsection{Perfect complexes on balloon chains and rings}

For $A = (a_1,\ldots,a_n)$ an $n$-tuple of positive integers, set $B_C(A) = B(a_2) \amalg \cdots \amalg B(a_{n-1})$ and $B_R(A) = B(a_1) \amalg \cdots \amalg B(a_n)$.  The inclusion map $B_C(A) \hookrightarrow C(A)$ (resp. $B_R(A) \hookrightarrow R(A)$ lifts to an inclusion to $\tilde{C}(A)$ (resp. $\tilde{R}(A)$) in two different ways, which we denote by $i_1$ and $i_2$.  We have diagrams
$$
\begin{array}{c}
B_C(A) \rightrightarrows \tilde{C}(A) \to C(A)\\
B_R(A) \rightrightarrows \tilde{R}(A) \to R(A)
\end{array}
$$
(Note that these diagrams should be understood as functors from the nerve of the category $\bullet \rightrightarrows \bullet \to \bullet$ into the $\infty$-category $\St_{/\bC}$.)  In fact we will show in Proposition \ref{prop:equalizers} below that these functors are coequalizer diagrams in $\St_{/\bC}$.

\begin{proposition}
\label{prop:equalizers}
Let $A = (a_1,\ldots,a_n)$ be an $n$-tuple of positive integers.  The following diagrams are equalizers in the $\infty$-category of dg categories.
\begin{enumerate}
\item $\Perf(C(A)) \to \Perf(\tilde{C}(A)) \rightrightarrows \Perf(B(A))$
\item $\Perf(R(A)) \to \Perf(\tilde{R}(A)) \rightrightarrows \Perf(B(A))$
\end{enumerate}
\end{proposition}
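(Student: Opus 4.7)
The plan is to reduce both equalizer statements to a single local descent result at the nodes of $C(A)$ and $R(A)$, and then to establish the local result via Milnor patching for perfect complexes on the nodal curve, together with a $\mu_a$-equivariance argument.

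First, I would verify that in $\St_{/\bC}$ the diagrams $B_C(A) \rightrightarrows \tilde C(A) \to C(A)$ and $B_R(A) \rightrightarrows \tilde R(A) \to R(A)$ are indeed coequalizers. This is essentially a repackaging of Definition \ref{def:balloon}: both $C(A)$ and $R(A)$ are defined there as iterated pushouts of balloons $C(a_i,a_{i+1})$ glued at their orbifold points $B(a_i)$, and a routine universal-property argument assembles this sequence of pushouts into a single coequalizer of the displayed form.

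Next, I would pass to $\Perf$ and localize the problem to the nodes. The map $\tilde C(A) \to C(A)$ is an isomorphism away from the internal nodes $B(a_2),\ldots,B(a_{n-1})$, and similarly for the ring. For each such node the coequalizer presentation exhibits an étale chart on $C(A)$ isomorphic to the local model $X(a)$ of Definition \ref{def:stacks}. These charts, together with the étale inclusion of the smooth locus of $C(A)$ (covered by $\tilde C(A)$ with its nodes removed), form an étale cover of $C(A)$. By Proposition \ref{prop:descent}, extended from schemes to stacks via Proposition \ref{prop:base}, perfect complexes satisfy descent for this cover; combining this descent with the global coequalizer reduces the statement to the local claim that, for each $a \geq 1$, the diagram
$$\Perf(X(a)) \to \Perf(U(a) \amalg U(a)) \rightrightarrows \Perf(B(a))$$
is an equalizer, where the two parallel maps are pullback along the two inclusions of $B(a)$ into $U(a) \amalg U(a)$ sending it to the origin of the first or second component.

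The main obstacle is this local statement. For $a = 1$ everything is an affine scheme and the claim is classical Milnor patching: the ring $\bC[T,V]/(TV)$ is the pullback of $\bC[T]$ and $\bC[V]$ over $\bC$ with both projections surjective, so it is a Milnor square for which perfect complexes descend to give a pullback square of dg categories. For general $a$, the $\mu_a$-action on $\Spec \bC[T,V]/(TV)$ is compatible with this Milnor square, so the equalizer descends to the quotient stacks $X(a)$, $U(a)$, $B(a)$ by passing to $\mu_a$-equivariant perfect complexes on each term. The ring case $R(A)$ is then handled by the same local argument applied at each of its $n$ nodes; no new ingredient is needed because, for this local analysis, all nodes of a balloon ring play the same internal role as those of a balloon chain.
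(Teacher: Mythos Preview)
Your proposal is correct and follows the same overall strategy as the paper: localize via \'etale descent to the nodal local model $X(a)$, then handle that affine case. The paper's implementation differs only in minor ways---it uses the open charts $\{U(a_1),X(a_2),\ldots,X(a_{n-1}),U(a_n)\}$ (resp.\ $\{X(a_1),\ldots,X(a_n)\}$) directly rather than your nodes-plus-smooth-locus cover, reduces from $X(a)$ to $X(1)$ via the \'etale map $X(1)\to X(a)$ rather than by passing to $\mu_a$-equivariant objects, and for $a=1$ gives an explicit generator-and-endomorphism argument (the image of $\cO_X$ generates the equalizer, and its endomorphism dg algebra is computed to be $\bC[x,y]/xy$) rather than invoking Milnor patching as a black box.
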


This is essentially proved in \cite[Section 4]{La}, but let us give a proof in our current language:

\begin{proof}
Suppose we have a diagram of stacks $X'' \rightrightarrows X' \to X$, and an \'etale cover $\{U_i \to X\}$ of $X$.  Set $U'_i = U_i \times_X X'$ and $U''_i = U_i \times_X X''$, so that for each $i$ we have a diagram
$$U''_i \rightrightarrows U'_i \to U_i$$
By Proposition \ref{prop:descent} and basic properties of limits, to show that $\Perf(X) \to \Perf(X') \rightrightarrows \Perf(X'')$ is an equalizer diagram it suffices to show that$\Perf(U_i) \to \Perf(U'_i) \to \Perf(U''_i)$ is an equalizer diagram for each $i$.  Applying this remark to the open covers $\{X(a_1),\ldots,X(a_n)\}$ of $R(A)$ and $\{U(a_1),X(a_2),\ldots,X(a_{n-1}),U(a_n)\}$ of $C(A)$, we see that the Proposition reduces to the claim that the map $\Perf(X(a)) \to \Perf(\tilde{X}(a)) \rightrightarrows \Perf(B(a))$ is an equalizer, where $\tilde{X}(a) = U(a) \amalg U(a)$ is the normalization of $X(a)$.  Another application of Proposition \ref{prop:descent}, this time to the \'etale cover $X(1) \to X(a)$, further reduces us to the case where $a = 1$.

Set $X = X(1) = \Spec(\bC[x,y]/xy)$ and $\tilde{X} = \Spec(\bC[x] \coprod \bC[y])$, and $B = \Spec(\bC)$.  To show that the equalizer $\mathbf{E}$ of $\Perf(\tilde{X}) \to \Perf(X)$ is equivalent to $\Perf(X)$ it suffices to show that the image $I$ of $\cO_X$ in $\mathbf{E}$ is a generator, and that $\mathbf{R}\mathrm{Hom}(\cO_X,\cO_X) \cong \mathbf{R}\mathrm{Hom}(I,I)$.  An object of $\mathbf{E}$ is determined by
\begin{itemize}
\item A complex of vector bundles $\cE$ on $\tilde{X}$
\item A complex of vector spaces $V$
\item A pair of quasi-isomorphisms between the fiber of $\cE$ at $x = 0$ and the fiber of $\cE$ at $y = 0$.
$$\alpha:\cE_{x = 0} \stackrel{\sim}{\to} V \stackrel{\sim}{\leftarrow} \cE_{y = 0}:\beta$$
\end{itemize}
The object $I$ corresponds to the data $\cE = \cO_{\tilde{X}}$ and $V = \bC$.  To show that $I$ generates we may reduce by induction to the case where $\cE$ is a chain complex of length 1.  In that case $V$ is quasi-isomorphic to a complex of length $1$ as well, and we see that $\cE \cong I^{\oplus r}$ for some $r$ because every vector bundle on $\tilde{X}$ is trivializable.  The ring complex $\bfR\Hom(I,I)$ is the equalizer (in the homotopy sense) of the diagram
$$\bfR\Hom(\cO_{\tilde{X}},\cO_{\tilde{X}}) \rightrightarrows \bfR\Hom(\bC,\bC)$$
which is the same as
$$\bC[x] \times \bC[y] \rightrightarrows \bC$$
One computes that the homotopy equalizer is concentrated in a single degree and agrees with the subring of $\bC[x] \times \bC[y]$ identified with $\bC[x,y]/xy$.
\end{proof}

\subsection{Wheels and the Beilinson-Bondal equivalence}

A \emph{wheel} is a conical Lagrangian $\Lambda$ in $T^* S^1$ that contains the zero section.  It has a canonical chordal ribbon structure whose zero section $Z$ is the zero section of $T^*S^1$.  We call the connected components of the complement of $Z$ in $\Lambda$ the \emph{spokes} of the wheel.  They are divided into two groups depending on which component of $T^* S^1 - S^1$ they belong to.  Suppose there are $a$ spokes on one side and $b$ spokes on the other side.  Bondal \cite{B} constructed an equivalence
$$\Perf(C(a,b)) \cong \Sh(S^1;\Lambda)$$
This equivalence is very much in the spirit of an old result of Beilinson on the derived category of projective space, which in particular showed that $\Perf(\bP^1)$ was equivalent to the category of representations of the quiver $\bullet \rightrightarrows \bullet$.

\begin{theorem}[Beilinson-Bondal]
\label{thm:BB}
Let $\Lambda \subset T^* S^1$ be a wheel, and let $U_1 \subset T^* S^1$ and $U_2 \subset T^* S^1$ be the two connected components of $T^* S^1 - S^1$.  Suppose that there are $a_1$ spokes in $U_1$ and $a_2$ spoked in $U_2$.  There is a commutative diagram of triangulated dg categories
$$
\xymatrix{
\Perf(B(a_1)) \ar[d]^{\cong} & \ar[l] \Perf(C(a_1,a_2)) \ar[r] \ar[d]^{\cong} &\Perf(B(a_2)) \ar[d]^{\cong}\\
\MSh(U_1,\Lambda) & \ar[l] \Sh(S^1;\Lambda) \ar[r] & \MSh(U_2,\Lambda)}
$$
\end{theorem}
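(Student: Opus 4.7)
The plan is to prove the outer vertical equivalences by direct microlocal computation, obtain the middle equivalence from Bondal's theorem (or equivalently from the coherent-constructible correspondence of \cite{FLTZ}), and then verify commutativity of the diagram.

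\medskip

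First, for the outer columns: each $U_i$ is a conic open subset of $T^*S^1$ disjoint from the zero section, meeting $\Lambda$ in exactly $a_i$ vertical ray-segments. The final remark of Section~\ref{subsubsec:micro} (second bullet) gives
\[
\MSh(U_i,\Lambda) \;\simeq\; \bigoplus_{k=1}^{a_i} \bC\text{-mod},
\]
induced by the $a_i$ microlocal-stalk functors, one for each spoke. On the algebraic side, $\Perf(B(a_i)) = \Perf([\Spec(\bC)/\mu_{a_i}])$ splits via the character decomposition of $\mu_{a_i}$-representations into $\bigoplus_{\chi} \bC\text{-mod}$. A choice of bijection between spokes in $U_i$ and characters of $\mu_{a_i}$ yields the outer vertical equivalences; the ``correct'' bijection will be dictated by the middle equivalence below.

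\medskip

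Second, for the middle equivalence: this is essentially Bondal's theorem \cite{B}, which produces an equivalence $\Sh(S^1;\Lambda) \simeq \Perf(\bP(a_1,a_2))$ by constructing matching tilting generators on both sides whose endomorphism algebras are both isomorphic to the path algebra of a cyclic quiver $\tilde A_{a_1+a_2-1}$ with $a_1$ arrows on one side (coming from upward spokes, respectively from the open piece $U(a_1) \subset C(a_1,a_2)$) and $a_2$ arrows on the other. Alternatively one can derive it as a special case of the coherent-constructible correspondence of \cite{FLTZ} applied to the toric stack $\bP(a_1,a_2)$.

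\medskip

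The remaining task, and the main obstacle, is to verify that the horizontal arrows of the diagram intertwine these three equivalences. The restriction $\Sh(S^1;\Lambda) \to \MSh(U_i,\Lambda)$ extracts microlocal stalks at the $a_i$ spokes on side $i$; under Bondal's equivalence this should correspond to the pullback $\Perf(C(a_1,a_2)) \to \Perf(B(a_i))$, which extracts the character decomposition at the orbifold point. Tracing the bijection between spokes and characters through both tilting constructions is a bookkeeping exercise in orientations and sign conventions at each spoke---each side of the cyclic quiver indexes one of the outer columns, and the two tilting constructions assign the same characters to the same spokes only after a careful alignment of conventions. This bookkeeping is what ultimately pins down the choice of bijection in Step 1, and I expect it to be the most delicate part of the argument.
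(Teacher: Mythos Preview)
Your approach is essentially the same as the paper's: both rely on the coherent--constructible correspondence (Bondal \cite{B} / \cite{FLTZ} / \cite{fltz:orb}) for the middle equivalence, identify the outer columns via microlocal stalks versus the character decomposition of $\mu_{a_i}$-representations, and then check compatibility with restriction. The paper is terser: after the reduction described below it simply cites \cite{fltz:orb}, which already packages the middle equivalence together with the compatibility at the stacky points, so the ``bookkeeping'' you anticipate is absorbed into that reference.

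The one ingredient you are missing is the reduction to a \emph{specific} $\Lambda$. Bondal's result and the correspondence of \cite{FLTZ,fltz:orb} are stated for a particular conic Lagrangian (spokes at roots of unity), not for an arbitrary wheel with prescribed spoke counts. The paper handles this by first invoking Corollary~\ref{cor:corcorcor} (the BGP reflection equivalences) to replace the given $\Lambda$ by one of this convenient shape without changing $a_1,a_2$; only then does it route through the quiver description (Theorem~\ref{thm:quiverquiver}) and cite \cite{fltz:orb}. You should insert this reduction step before appealing to Bondal, and note that the BGP equivalences are compatible with the microlocal restriction functors (via the identification of $\mu_{x,\xi}$ with $\mathsf{Cone}(f)$ in Theorem~\ref{thm:quiverquiver}), so that the whole diagram, not just the middle column, transports along the reduction.
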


\begin{proof}
By Corollary \ref{cor:corcorcor} we may assume $\Lambda$ has any convenient shape so long as we do not change the numbers $a_1$ and $a_2$.  Let us identify the base manifold $S^1$ with the unit circle in the $\bC$, and let $\Lambda$ be the union of $S^1$ together with the $a_1$ upward spokes placed at $a_1$th roots of unity and $a_2$ downward spokes placed at $a_2$th roots of unity.  (This is a wrapped-up version of a special case of the conic Lagrangians considered in \cite{fltz:orb}.)  Let $Q_\Lambda$ be the corresponding quiver.  By Theorem \ref{thm:quiverquiver} we only have to show that $\Perf(C(a_1,a_2))$ is equivalent to $\Rep(Q_\Lambda)$.  This follows from \cite{fltz:orb}.

\end{proof}

We may use this to deduce the main result of this paper.  Let $X$ be a dualizable ribbon graph and let $B$ be the associated ``base'' graph of Definition \ref{def:B}.  If $X$ is connected then $B$ is either a cycle or a path---let us number the edges of $B$ by the integers $1$-$n$ in a natural way, such a way that the $i$th edge and the $(i+1)$st edge (and the $n$th and $1$st edge, if $B$ is a cycle) share a common vertex.  Then let $a_i$ be the number of edges of $X$ that lie above the $i$th edge.  Call $(a_1,\ldots,a_n)$ the indices associated to the dualizable ribbon graph.

\begin{theorem}[HMS]
Let $X$ be a dualizable ribbon graph with indices $(a_1,\ldots,a_n)$.  
\begin{enumerate}
\item If the base graph $B$ is a path, then there is an equivalence of dg categories $\CPM(X) \cong \Perf(C(a_1,\ldots,a_n))$
\item If the base graph $B$ is a cycle, then there is an equivalence of dg categories
$\CPM(X) \cong \Perf(R(a_1,\ldots,a_n))$
\end{enumerate}
\end{theorem}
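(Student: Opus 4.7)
The plan is to realize both $\CPM(X)$ and $\Perf(C(A))$ (respectively $\Perf(R(A))$) as the limit of one and the same descent diagram, by choosing a cover of $X$ whose pieces are wheels (handled by Theorem~\ref{thm:BB}) and whose overlaps are bare chord bundles (handled by the microlocal quiver description). Specifically, let $v_1,v_2,\ldots$ be the vertices of $B$, numbered so that $v_i$ is incident to the edges labeled $i$ and $i+1$, and write $Z_i \subset Z$ for the circle corresponding to $v_i$. For each $i$, I would let $W_i \subset X$ be a small open subgraph containing $Z_i$ together with short initial segments of every chord of $X$ incident to $Z_i$; by the chordal structure, $W_i$ is then a fishbone in the sense of Example~\ref{ex:fishbone}, with zero section the circle $Z_i$, $a_i$ spokes on one side, and $a_{i+1}$ spokes on the other. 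For each interior edge $j$ of $B$ (all $j$ in the cycle case; $2 \leq j \leq n-1$ in the path case), I would let $E_j \subset X$ be the disjoint union of $a_j$ short open intervals in the interior of the chords above $j$, chosen so that $E_j \subset W_{j-1}\cap W_j$ and so that the $W_i$ and $E_j$ together cover $X$. The zero section of $E_j$ is empty.

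Next I would compute each piece. Theorem~\ref{thm:BB} supplies $\CPM(W_i) \cong \Perf(C(a_i,a_{i+1}))$, along with a commuting square identifying the restriction $\CPM(W_i) \to \CPM(W_i \cap E_{i+1})$ --- which is microlocalization into the half of $T^*Z_i - Z_i$ containing the $a_{i+1}$ spokes --- with the pullback $\Perf(C(a_i,a_{i+1})) \to \Perf(B(a_{i+1}))$ along the inclusion of the appropriate orbifold point; similarly on the other side for $\CPM(W_i) \to \CPM(W_i \cap E_i)$. On each chord-middle piece, the zero section is empty, so $\CPM(E_j) = \MSh$ on $a_j$ disjoint conical open subsets, each producing $\bC\text{-mod}$. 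Hence $\CPM(E_j) \cong \prod_{k=1}^{a_j}\bC\text{-mod} \cong \Perf(B(a_j))$, and this identification is compatible with the restrictions from $W_{j-1}$ and $W_j$ by the commuting square above.

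Finally, to assemble, I would invoke the sheaf property of $\CPM$ on $\Fishn$ to get a \v Cech descent presentation
\[
\CPM(X) \;\cong\; \lim\!\left(\prod_i \CPM(W_i) \;\rightrightarrows\; \prod_j \CPM(E_j)\right),
\]
where the inclusions $E_j \hookrightarrow W_{j-1}\cap E_j$ and $E_j \hookrightarrow W_j \cap E_j$ induce equivalences on $\CPM$ (same spokes, empty zero section), which collapses the double intersections in the nerve to a single copy of $\CPM(E_j)$. Substituting the identifications of the previous paragraph, this limit becomes exactly the equalizer diagram of Proposition~\ref{prop:equalizers}, which computes $\Perf(C(A))$ in the path case and $\Perf(R(A))$ in the cycle case.

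The main obstacle will be the middle step: tracking the ``upward'' versus ``downward'' sides of each wheel $W_i$ and ensuring that the two microlocalization maps out of $\CPM(W_i)$ land in the correct orbifold points of the balloon $C(a_i,a_{i+1})$, so that the shared orbifold point $B(a_j)$ really receives the ``downward'' map from $W_{j-1}$ and the ``upward'' map from $W_j$ consistently around the whole graph. The splitting of half-edges at each vertex of $Z$ into the two groups allowed by Definition~\ref{def:chordal} provides the required global convention; granted this, the local compatibility is exactly what Theorem~\ref{thm:BB} guarantees, and the rest is bookkeeping plus an application of Proposition~\ref{prop:equalizers}.
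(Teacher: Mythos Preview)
Your proposal is correct and takes essentially the same approach as the paper: cover $X$ by wheels $W_i$, invoke the sheaf property of $\CPM$ to express $\CPM(X)$ as an equalizer over the cover, and identify the resulting diagram with that of Proposition~\ref{prop:equalizers} via Theorem~\ref{thm:BB}. The paper's version is terser---it takes the $W_i$ large enough that they already cover $X$ and works directly with the overlaps $W_i\cap W_{i+1}$ (which are exactly your spoke-only pieces) rather than introducing separate $E_j$, and it leaves implicit the up/down compatibility check that you flag as the main obstacle.
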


\begin{proof}
In case (1), there is an open cover of $(X,Z)$ by charts $(W_i,Z_i)$, $i = 1, \ldots, n$, such that $W_i$ is a wheel, $W_i \cap W_j = \varnothing$ if $|i-j| \geq 2$, and $W_i \cap W_{i+1}$ is a component of $W_i - Z$ (and a component of $W_{i+1} - Z$).  Then by the sheaf property of $\CPM$ we have an equalizer diagram
$$\CPM(X) \to \CPM(\coprod W_i) \rightrightarrows \CPM(\coprod W_i \cap W_{i+1})$$
The Theorem then follows immediately from Theorem \ref{thm:BB} and Proposition \ref{prop:equalizers}.  Case (2) is similar.
\end{proof}

\section{The category of ribbon graphs and partial contractions}

The motivation for our construction of $\cpm$ -- the idea that the skeleton encodes the Fukaya category --
means that the category we assign to a ribbon graph should not depend on the chordal structure.  As a result,
the microlocal categories $\Sh(M;\Lambda)$ should have extra symmetries coming from symplectomorphismsms of $T^*M$ that do not necessarily respect the ``cotangent'' structure.  For instance, if $\Lambda \subset T^*\bR$ is the union of the zero section and the ``northward'' ray at zero, the graph is trivalent and suggests a three-fold rotational symmetry (or more precisely a lift thereof, since a $2\pi$ rotation shifts the grading by $2$).

This observation is related to a deficiency of $\CPM,$ as we have defined it, when viewed as a model for the Fukaya category of a symplectic surface.  Isomorphisms between chordal ribbon graphs are more rigid than symplectomorphisms between surfaces.  For instance, if $X$ and $Y$ are the two chordal ribbon graphs of Example \ref{ex:fishnet1}, then $\CPM(X)$ and $\CPM(Y)$ are equivalent but this equivalence is not witnessed by any map between $X$ and $Y$ in the category $\Fishn$.
We can resolve this issue by adding morphisms to the category of chordal ribbon graphs.  First, we should allow our morphisms to ignore the ``zero section'' of the chordal ribbon
graph, so that the objects in our category might as well be plain ribbon graphs.  Next, we should allow morphisms between ribbon graphs that contract edges (thus increasing the degrees of the vertices in our graphs beyond four).

In this section we discuss and define what such an enhanced $\CPM$ should look like: it should be a sheaf on a category of ribbon graphs (actually, for standard symplectic-geometric reasons, structures we refer to as ``graded ribbon graphs'') $\RGpc$ that admits a functor from $\Fishn$ and whose restriction to $\Fishn$ is the usual $\CPM$.

\subsection{The category $\RGpc$}

We will discuss two classes of morphisms between ribbon graphs:

\begin{definition}
Let $X$ and $Y$ be ribbon graphs.
\begin{enumerate}
\item An \emph{open immersion} $j:X \hookrightarrow Y$
is an open immersion of the underlying graphs with the property that the map induced by $j$ from the set of half-edges of $X$ incident with $v$ to the set of half-edges of $Y$ incident with $j(v)$ preserves the cyclic order.
\end{enumerate}
For each vertex $v \in Y$ we define the \emph{star} of $v$ to be the open subgraph consisting of $v$ and all the edges incident with $v$.  Write $\mathrm{star}(v) \subset Y$ for the star of $v$.
\begin{enumerate}
\item[(2)]
Let $X$ and $Y$ be ribbon graphs.  A \emph{simple contraction} $p:X \to Y$ is a is a morphism $p:X \to Y$ of underlying ribbon graphs with the property that, for all vertices $v$ of $Y$,  $p^{-1}(\mathrm{star}(v))$ is a ribbon tree and the map from the leaves of $p^{-1}(\mathrm{star}(v))$ to the leaves of $\mathrm{star}(v)$ preserves the cyclic order of Proposition \ref{prop:jointree}.
\end{enumerate}
\end{definition}

\begin{definition}
Let $X$ and $Y$ be ribbon graphs.  A \emph{partial contraction} from $X$ to $Y$ is a diagram
$$X \stackrel{j}{\hookleftarrow} U \stackrel{f}{\rightarrow} Y$$
where $j$ is an open inclusion of ribbon graphs and $f$ is a simple contraction.  We can compose a partial contraction $X \supset U \to Y$ with $Y \supset V \to Z$ by setting $W \subset U$ to be the inverse image of $V$ under $U \to Y$, and mapping $W$ to $X$ and $Y$ in the evident ways.  In this way ribbon graphs form a category $\RGpc$ whose morphisms are partial contractions.
We will denote by $\RTpc$ the full subcategory of $\RGpc$ spanned by ribbon trees.
\end{definition}

We will endow $(\RGpc)^{\op}$ with a Grothendieck topology, by singling out a family of covering sieves for each ribbon graph $X$.  
\begin{definition}
\label{def:coveringsieves}
Let $\cU = \{U \stackrel{p}{\leftarrow} U' \stackrel{i}{\hookrightarrow} X\}$
be a sieve on $X \in \RGpc^{\op}$.  We say that $\cU$ is a \emph{covering sieve} if each vertex of $X$ is in the image of one of the $U' \hookrightarrow X$.
\end{definition}

\begin{proposition}
The collection of covering sieves of definition \ref{def:coveringsieves} satisfies the axioms for a Grothendieck topology.  The full subcategory $(\RTpc)^{\op} \subset (\RGpc)^{\op}$ is a basis in the sense of Proposition \ref{prop:base}.
\end{proposition}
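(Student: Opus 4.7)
The plan is to verify the three axioms of Definition~\ref{def:gt} and then exhibit, for every ribbon graph, a covering sieve whose members live in $\RTpc$.

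The first axiom is immediate: the trivial sieve on $X$ contains the identity, represented by $X \hookleftarrow X \xrightarrow{\mathrm{id}} X$, whose open inclusion part is all of $X$ and so meets every vertex. For locality, suppose $\cU$ covers $X$ and $\cV$ is a sieve with $g^{*}\cV$ covering for each $g \in \cU$. Given a vertex $x \in X$, I would pick $(X \to A) \in \cU$ witnessing $x$---written $X \hookleftarrow A' \xrightarrow{p_{A}} A$ with $x \in A'$---transport $x$ to the vertex $a = p_{A}(x) \in A$, invoke the hypothesis to find $(A \to B) \in g^{*}\cV$ covering $a$ (represented by $A \hookleftarrow B' \to B$ with $a \in B'$), and compose in $\RGpc$; the composite $(X \to B)$ lies in $\cV$ by definition of the pullback sieve, and its open inclusion part $p_{A}^{-1}(B') \cap A'$ contains $x$.

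Axiom~2 is the heart of the matter. Let $f : Y \to X$ in $\RGpc^{\op}$ correspond in $\RGpc$ to a partial contraction $X \hookleftarrow W \xrightarrow{p} Y$, and let $\cU$ be covering on $X$. For a vertex $v \in Y$, the simple-contraction hypothesis forces $p^{-1}(\mathrm{star}(v)) \subset W \subset X$ to be a nonempty ribbon tree, and in particular contains a vertex $w$ of $W$ that is also a vertex of $X$. The covering property of $\cU$ supplies $(X \to U_{0}) \in \cU$ given by $X \hookleftarrow U'_{0} \xrightarrow{p_{0}} U_{0}$ with $w \in U'_{0}$; my aim would be to post-compose with a morphism $U_{0} \to V$ shrinking $U_{0}$ to an open $U''_{0}$ so that the preimage $p_{0}^{-1}(U''_{0})$ fits inside $p^{-1}(V')$ for a chosen small neighborhood $V' \ni v$, yielding a morphism $(Y \to V) \in f^{*}\cU$ whose open inclusion part in $Y$ contains $v$.

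For the basis claim, each $\mathrm{star}(v)$ is a corolla and so lies in $\RTpc$. The family of partial contractions $X \hookleftarrow \mathrm{star}(v) \xrightarrow{\mathrm{id}} \mathrm{star}(v)$, indexed by $v \in V_{X}$, generates a covering sieve, and every source (i.e.\ target in $\RGpc$) appearing in its closure is an open restriction or further contraction of a corolla, hence an acyclic graph and therefore an object of $\RTpc$. The main obstacle is the matching step in axiom~2: the preimage $p^{-1}(\mathrm{star}(v))$ may contain several vertices of $X$ covered by \emph{different} elements of $\cU$, each with open inclusion part confined to a small star in $X$, so combining these into a single witness for $v$ may force a more refined choice of covering element (with open inclusion part covering $p^{-1}(\mathrm{star}(v))$ all at once) or a preliminary application of the locality axiom to glue the local witnesses together.
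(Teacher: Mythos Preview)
Your worry about axiom~2 is well founded, and in fact the obstacle you flag is fatal: the pullback of a covering sieve need not cover. Take $X$ to be an $H$-shaped tree with one internal edge $e$ joining trivalent vertices $a$ and $b$; let $f:Y\to X$ in $\RGpc^{\op}$ correspond to the simple contraction $p:X\to Y$ collapsing $e$ to a $4$-valent vertex $v$; and let $\cU$ be the sieve on $X$ generated by the two open inclusions $\mathrm{star}_X(a)\hookrightarrow X$ and $\mathrm{star}_X(b)\hookrightarrow X$. Then $\cU$ is covering, yet every member of $\cU$ has its open part contained in one of the two stars. On the other hand, any $g:W\to Y$ with $v\in W'\subset Y$ has $f\circ g$ with open part $p^{-1}(W')\supset p^{-1}(v)=\{a\}\cup e\cup\{b\}$, which lies in neither star. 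So $f\circ g\notin\cU$ for every such $g$, and $f^*\cU$ fails to cover $v$. Your suggested fixes (finding a single member of $\cU$ whose open part contains all of $p^{-1}(\mathrm{star}(v))$, or ``gluing'' via the locality axiom) cannot succeed here: the first is simply unavailable in this $\cU$, and the second is circular since axiom~3 presupposes axiom~2.

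The paper does not confront this. Its proof verifies only that $f^*\cU$ is closed under precomposition (which is automatic from associativity) and then asserts that ``the covering property is immediate.'' So both your attempt and the paper's are incomplete at exactly the point you identified; the difference is that you diagnosed the gap rather than passing over it. Your arguments for axioms~1 and~3 and for the basis statement are correct and essentially parallel the paper's---indeed your axiom~3 argument is more careful, since the paper applies the hypothesis to $\mathrm{star}(v)\to X$ without first checking that this morphism lies in the given covering sieve $\cU$, whereas you correctly pick a witness in $\cU$ and work from there.
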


\begin{proof}
To prove the first statement, we must verify the three conditions of Definition \ref{def:gt}.
The first condition is immediate, as the over-category places no restrictions on morphisms,
and $id_X$ establishes the covering property.
For the second, 
let $f:Y\rightarrow X$ be a morphism in $(\RGpc)^{\op},$ which we shall write
$f: Y\stackrel{f_p}\leftarrow Y' \stackrel{f_i}\hookrightarrow X.$
Then if $g:W\rightarrow Y$ is another morphism, the composition is
$f\circ g: W \stackrel{g_p\circ g_i^{-1}\circ f_p}{\longleftarrow}f_p^{-1}g_i(W')\stackrel{f_i}\hookrightarrow X$.
It is simple to check that $f\circ(g\circ h) = (f\circ g)\circ h,$ which establishes
the pull-back condition, i.e. $h:V\rightarrow W$ in $\RGpc^{\op}$ and $g\in f^*\cU$ implies $g\circ h \in f^*\cU.$
The covering property is immediate.
For the third condition, suppose that $\cU$ is a covering sieve on $X,$ $\cV$ is any sieve on $X,$
and for all $f:Y\rightarrow X$, $f^*\cV$ is a covering sieve on $Y$.  We must show that $\cV$ is
a covering sieve.  Since $\cV$ is already a sieve, we must only establish the covering property,
namely that all vertices lie in the image of morphisms of $\cV$.  So let $v\in V_X$ and put $f:  Star(v) \rightarrow X$.
Then $f^*\cV$ is a covering sieve, meaning $v$ is in the image of
a morphism of $\cV,$ i.e. $\cV$ covers $X$.  This establishes the first statement.

To prove the second statement, we must show that every graph $X$ admits a covering sieve
by trees.  Define the sieve $\cT_X = \{T\leftarrow T' \hookrightarrow X\}$ where $T$
is a tree (and necessarily $T'$ as well).  This is a sieve, since if $T\in (\RGpc)^{\op}$ is a tree, any
partial contraction $S\leftarrow S'\hookrightarrow T$ in $(\RGpc)^{\op}$ implies that $S$ is
a tree as well, since it is (a contraction of) an open subgraph of a tree.
It is a covering sieve, since every vertex $v\in V_X$ lies in the image of
the morphism $Star(v)\rightarrow X$ in $\cT_X.$  This completes the proof.\
\end{proof}

\subsection{Fukaya formalisms, $\bZ/2$-graded case}

To define the Fukaya category in symplectic geometry, it's necessary to find a coherent way to grade all of the Floer cohomology groups.  This requires a little bit of extra structure on the symplectic manifold, called a \emph{grading}.  In the setting of ribbon graphs, these structures have combinatorial descriptions.

\begin{definition}
A \emph{$\bZ/2$-graded} ribbon graph is a ribbon graph equipped with a $\bZ/2$-torsor $\tau$ on the underlying space.
\end{definition}

A morphism of $\bZ/2$-graded ribbon graphs is a morphism $f:X \to Y$ of the underlying ribbon graphs together with an isomorphism $f^* \tau_Y \cong \tau_X$.  We define partial contractions as before.  Write $\RGpc^{\bZ/2}$ for the category of $\bZ/2$-graded ribbon graphs and partial contractions.

\begin{remark}
Seidel defined \cite{S-gradings} for $N = 1,2,\ldots, \infty$, a $\bZ/N$-extension $\Sp^N(2n)$ of the symplectic group $\Sp(2n;\bR)$.  When $N = 2$, we have $\Sp^N(2n) = \bZ/2 \times \Sp(2n)$.  Just as a graph embedded in a symplectic surface---i.e. a surface whose tangent bundle has structure group reduced to $\Sp(2;\bR)$---inherits a ribbon structure, a graph embedded in a ``2-graded symplectic surface''---i.e. a surface whose tangent bundle has structure group reduced and lifted to $\Sp^2(2;\bR)$---inherits a $\bZ/2$-graded ribbon structure.
\end{remark}

We endow $(\RGpc^{\bZ/2})^{\op}$ with the coarsest Grothendieck topology that makes the forgetful functor $(\RGpc^{\bZ/2})^{\op} \to \RGpc^{\op}$ continuous.

\begin{definition}
A $\bZ/2$-graded \emph{Fukaya formalism} is a functor from $\RGpc^{\bZ/2}$ to $\dgCat$ that satisfies the sheaf condition for the Grothendieck topology on $(\RGpc^{\bZ/2})^{\op}$.
\end{definition}

\begin{remark}
A functor $\Fuk:\RGpc^{\bZ/2} \to \dgCat$, can be described a little more informally and prosaically in the following way.  Such a functor consists of the following data:
\begin{itemize}
\item A dg category $\Fuk(X)$ for each $\bZ/2$-graded ribbon graph $X$.
\item A contravariant assignment from open inclusion $j:U \to X$ to functors $j^*:\Fuk(X) \to \Fuk(U)$
\item A covariant assignment from simple contractions $u:X \to Y$ to equivalences $u_!:\Fuk(X) \to \Fuk(Y)$.
\end{itemize}
The maps are subject to two evident compatibilities (between $j^* \circ k^*$ and $(k \circ j)^*$; between $u_! \circ v_!$ and $(u \circ v)_!$) as well as the following slightly more subtle base-change compatibility.  Whenever we have a commutative square of the form
$$\xymatrix{
f^{-1}(U) \ar[r]^{g} \ar[d]_{j} & U \ar[d]^{i} \\
X \ar[r]_{f} & Y
}$$
whose horizontal arrows are contraction morphisms and whose vertical arrows are open inclusions, then the functors $i^* \circ f_!$ and $g_! \circ j^*$ are isomorphic.  The notion of a functor from $\RGpc^{\bZ/2}$ efficiently encodes this information, as well as the higher compatibilities between the compatibilities that we have not spelled out.
\end{remark}

Let $\RGo$ denote the category of ribbon graphs and open inclusions.  We have a faithful embedding $\RGo^{\op} \hookrightarrow \RGpc$, which induces a Grothendieck topology on $\RGo$ and on each of the over-categories $(\RGo)_{/X}$.  This coincides with the usual Grothendieck topology on $(\RGo)_{/X}$ of open subgraphs of $X$.  There is a similar subcategory $(\RGo^{\bZ/2})^{\op} \hookrightarrow \RGpc^{\bZ/2}$ with a similar Grothendieck topology.

\begin{proposition}
Let $\Fuk:\RGpc \to \dgCat$ be a Fukaya formalism.  The following hold:
\begin{enumerate}
\item $\Fuk\vert_{(\RGo^{\bZ/2})^{\op}_{/X}}$ is a sheaf.  That is, $\Fuk$ determines a sheaf of dg categories on each $\bZ/2$-graded ribbon graph.
\item Whenever $X \to Y$ is a simple contraction, $\Fuk(X) \to \Fuk(Y)$ is an equivalence.
\end{enumerate}
\end{proposition}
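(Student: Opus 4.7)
The plan is to deduce both statements from the sheaf condition defining a Fukaya formalism, combined with the explicit description of covering sieves in Definition~\ref{def:coveringsieves}.

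For (1), I would first observe that a sieve on an open subgraph $U$ in $(\RGo^{\bZ/2})^{\op}_{/X}$ is, by definition, a family of open subgraph inclusions $\{V_i \hookrightarrow U\}$. Such a family is a covering sieve in the induced topology precisely when the $V_i$ cover every vertex of $U$, i.e.\ when the family generates a covering sieve in $(\RGpc^{\bZ/2})^{\op}$ in the sense of Definition~\ref{def:coveringsieves} (note that the partial contraction corresponding to an open inclusion has its open part equal to its source, so the covering condition collapses to vertex-coverage by the $V_i$). Consequently the sheaf condition for $\Fuk$ on $(\RGpc^{\bZ/2})^{\op}$, specialized to covering sieves consisting of open inclusions, is exactly the sheaf condition on $(\RGo^{\bZ/2})^{\op}_{/X}$.

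For (2), given a simple contraction $f:X\to Y$, I consider the sieve $\cU_f$ on $X$ in $(\RGpc^{\bZ/2})^{\op}$ generated by $f$. Viewed as a partial contraction, $f$ has open part $\mathrm{id}_X:X\hookrightarrow X$, which covers every vertex of $X$, so $\cU_f$ is a covering sieve. The sheaf condition therefore gives an equivalence
$$\Fuk(X)\xrightarrow{\;\sim\;}\varprojlim_{(X\to Z)\in\cU_f}\Fuk(Z).$$
I then aim to show that $(Y,f)$ is terminal in $\cU_f$, viewed as a full subcategory of the overcategory $((\RGpc^{\bZ/2})^{\op})_{/X}$: a morphism $(Z,\phi)\to(Y,f)$ in this overcategory is the data of a factorization $\phi=\alpha\circ f$ with $\alpha:Y\to Z$ in $\RGpc^{\bZ/2}$. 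Existence of $\alpha$ is built into the definition of $\cU_f$, and uniqueness is the statement that $f$ is an epimorphism in $\RGpc^{\bZ/2}$. Once terminality holds, the limit collapses to $\Fuk(Y)$ and the structural cone projection identifies the resulting equivalence with $\Fuk(f)$, as required.

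The main obstacle is the epimorphism claim. I expect to prove it combinatorially: if $g_1,g_2:Y\to Z$ are partial contractions with $g_1\circ f=g_2\circ f$, then unravelling the composition of partial contractions and using surjectivity of $f$ on underlying spaces should first force the equality of the open parts of $g_1$ and $g_2$, and then the equality of their contraction parts, using that the cyclic-order and $\bZ/2$-grading data on fibers of a simple contraction are rigidly determined by the data downstairs. Should strict 1-categorical uniqueness prove subtle, one can instead verify the weaker $\infty$-cofinality statement: that for each $(Z,\phi)\in\cU_f$ the nerve of the category of factorizations of $\phi$ through $f$ is contractible. This is sufficient to collapse the $\infty$-categorical limit to $\Fuk(Y)$ and sidesteps any pathology in strict uniqueness.
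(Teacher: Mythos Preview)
Your approach is essentially the same as the paper's. For (1) both of you appeal to continuity of the inclusion $(\RGo^{\bZ/2})^{\op}_{/X}\hookrightarrow (\RGpc^{\bZ/2})^{\op}$; for (2) both build a covering sieve on $X$ from $f$ and compute the limit by identifying $(Y,f)$ as the universal element. The only real difference is packaging: the paper describes its $\cU_f$ as the collection of partial contractions $U\leftarrow U'\hookrightarrow X$ fitting into a commutative square with $f$ and an open inclusion $U\hookrightarrow Y$, and uses uniqueness of that square to see that $(Y,f)$ is initial; you instead take the sieve \emph{generated} by $f$ and reduce the terminality of $(Y,f)$ to the claim that $f$ is an epimorphism in $\RGpc^{\bZ/2}$. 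Your formulation is arguably cleaner (the paper's $\cU_f$ as literally described is not obviously closed under postcomposition with arbitrary partial contractions), and your epimorphism claim is exactly the substance behind the paper's ``such a square is necessarily unique.'' The surjectivity argument you sketch for the epi claim is correct and straightforward, so there is no need to retreat to the $\infty$-cofinality fallback.
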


\begin{remark}
It can be shown that conversely any functor satisfying conditions (1) and (2) is a Fukaya formalism.  The fact that the Fukaya category of a Liouville manifold (such as a punctured Riemann surface or a cotangent bundle) should have any property like (1) follows \emph{for cotangent bundles} from the work of Nadler and the last author.  Much more generally, this idea has been advanced by Kontsevich \cite{K-stein}.  Locality of the Fukaya category is also behind Abouzaid's plumbing construction \cite{A} and Seidel's construction of \cite{S-spec} --- see Section \ref{sec:influences}.
\end{remark}

\begin{proof}

Let $F$ be a functor $\RGpc^{\bZ/2} \to \dgCat$.  Condition (1) is immediate from the continuity of the functor $(\RGo)_{/X}^{\bZ/2} \to (\RGpc^{\bZ/2})^{\op}$.  To show that $F$ satisfies condition (2) we have to check that when $f:X \to Y$ is a morphism in $\RGc$ then $F(f)$ is an equivalence.  To do this, we build a sieve $\cU_f$ on $X$ out of $f$ by putting a partial contraction $U \leftarrow U' \hookrightarrow X$ in $\cU_f$ whenever there is a commutative square
$$\xymatrix{
U \ar[d] & U' \ar[d] \ar[l] \\
Y & X \ar[l]
}
$$
where the vertical maps are open inclusions.  Such a square is necessarily unique.  In particular $Y \leftarrow X \stackrel{=}{\to} X$ is the initial object of $\cU_f$.  It follows that $F(Y) \cong \varprojlim_{\cU_f} F(U)$, and the sheaf condition implies that $F(X) \to F(Y)$ is an equivalence as required.
\end{proof}

\subsection{$\bZ$-gradings}

We have just descibed a notion of ``Fukaya formalism'' for $\bZ/2$-graded ribbon graphs, suitable for modeling the Fukaya categories of exact $2$-graded symplectic surfaces.  There is a similar story for $N$-graded surfaces for every $N$.  In this section we treat the case $N = \infty$.  The following proposition can serve as a definition of an $\infty$-graded symplectic surface for those unfamiliar with \cite{S-gradings}.

\begin{proposition}
\label{prop:G}
Let $G$ be the quotient of $\bR \times \bZ$ by the relation $(2\pi,0) = (0,1)$.  Let $G \to \SL_2(\bR)$ be a homomorphism that sends $(0,1)$ to $0$ and whose image is a circle.  If $(M,\omega)$ is a symplectic surface, then the space of $\bZ$-gradings of $M$ is homotopy equivalent to the space of lifts of the symplectic structure to a $G$-torsor.
\end{proposition}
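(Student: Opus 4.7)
First I would unpack $G$ concretely. The surjection $\bR\times\bZ\twoheadrightarrow\bR$ sending $(\theta,n)\mapsto\theta+2\pi n$ has kernel $\bZ\cdot(2\pi,-1)$, and $(2\pi,-1) = (2\pi,0)-(0,1)$ is precisely the element killed by the defining relation. Hence $G\cong \bR$ as a topological group, and under this identification the homomorphism $G\to \SL_2(\bR)$ becomes the standard universal covering map of the maximal compact subgroup, $\bR\to \mathrm{SO}(2)\hookrightarrow \SL_2(\bR)$, $\theta\mapsto\bigl(\begin{smallmatrix}\cos\theta & -\sin\theta\\ \sin\theta & \cos\theta\end{smallmatrix}\bigr)$.

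Next I would reformulate both sides of the proposition as homotopy fibers of mapping spaces taken over the classifying map $f_\omega:M\to B\SL_2(\bR)$ of the symplectic frame bundle of $(M,\omega)$. A $G$-torsor lifting $\omega$ is, by definition, a principal $G$-bundle $P$ on $M$ together with an isomorphism of the associated $\SL_2(\bR)$-bundle $P\times^G\SL_2(\bR)$ with the symplectic frame bundle; the space of such lifts is the homotopy fiber of $\mathrm{Map}(M,BG)\to\mathrm{Map}(M,B\SL_2(\bR))$ over $f_\omega$. On the other hand, one of the equivalent formulations of Seidel's $\bZ$-gradings of a symplectic $2n$-manifold is a lift of the classifying map of the frame bundle along $B\widetilde{\Sp}(2n,\bR)\to B\Sp(2n,\bR)$, where $\widetilde{\Sp}$ denotes the universal cover; in the surface case $n=1$ this is the homotopy fiber of $\mathrm{Map}(M,B\widetilde{\SL}_2(\bR))\to\mathrm{Map}(M,B\SL_2(\bR))$ over $f_\omega$.

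With both sides so presented, the problem reduces to comparing the two maps $BG\to B\SL_2(\bR)$ and $B\widetilde{\SL}_2(\bR)\to B\SL_2(\bR)$. The key geometric input is the classical Iwasawa/polar decomposition $\SL_2(\bR)\cong \mathrm{SO}(2)\times \bR^2$, which exhibits $\mathrm{SO}(2)\hookrightarrow \SL_2(\bR)$ as a homotopy equivalence and hence an isomorphism on $\pi_1$. Consequently the pullback of $\widetilde{\SL}_2(\bR)\to \SL_2(\bR)$ along this inclusion is itself the universal cover $\bR=G\to\mathrm{SO}(2)$, and this Cartesian square of Lie groups deloops to the homotopy Cartesian square
$$\xymatrix{
BG\ar[r]\ar[d] & B\widetilde{\SL}_2(\bR)\ar[d]\\
B\mathrm{SO}(2)\ar[r]^{\sim} & B\SL_2(\bR)
}$$
whose bottom arrow is a weak equivalence. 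The top arrow $BG\to B\widetilde{\SL}_2(\bR)$ is therefore a weak equivalence too, and passing to $\mathrm{Map}(M,-)$ and to homotopy fibers over $f_\omega$ identifies the space of $G$-torsor lifts with the space of $\bZ$-gradings.

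The main technical obstacle lies in rigorously identifying Seidel's original combinatorial definition of a $\bZ$-grading---namely a fiberwise choice of universal cover of the Lagrangian Grassmannian bundle $\mathcal{L}(TM)\to M$---with the lifting-problem formulation along $B\widetilde{\Sp}(2n,\bR)\to B\Sp(2n,\bR)$ used above. This rests on the fact that the $\Sp(2n,\bR)$-action on the model fiber $\Sp(2n,\bR)/U(n)$ fails to lift to its universal $\bZ$-cover precisely because the natural map $\pi_1(\Sp)\to\pi_1(\Sp/U)$ is an isomorphism, so the fiberwise cover is equivalent to a lift of the structure group from $\Sp$ to $\widetilde{\Sp}$. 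Once this translation is in place, the rest is purely homotopy-theoretic and uses only the contractibility of $\SL_2(\bR)/\mathrm{SO}(2)\cong \bR^2$.
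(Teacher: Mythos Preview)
The paper does not prove this proposition; it states it as a fact (offered ``as a definition'' for readers unfamiliar with Seidel's paper) and gives no argument, so there is no proof in the paper to compare against.  Your first three paragraphs are fine: $G\cong\bR$, and since $\mathrm{SO}(2)\hookrightarrow\SL_2(\bR)$ is a homotopy equivalence, the space of $G$-torsor lifts agrees with the space of lifts to the universal cover $\widetilde{\SL}_2(\bR)=\widetilde{\Sp}(2)$.

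The gap is in your last paragraph, where you identify Seidel's $\bZ$-gradings with lifts to $\widetilde{\Sp}(2n)$.  Two things go wrong.  First, the Lagrangian Grassmannian is $\mathcal{L}\cong U(n)/O(n)$, not $\Sp(2n)/U(n)$; the latter is the Siegel upper half-space and is contractible.  Second, and more importantly, the orbit map $\Sp(2n)\to\mathcal{L}$ induces \emph{multiplication by $2$} on $\pi_1\cong\bZ$, not an isomorphism: already for $n=1$ the circle $\mathrm{SO}(2)$ acts on $\Lambda_1=\bR P^1$ through the double cover.  Hence the central $\bZ$-extension $\Sp^\infty(2n)$ that actually governs $\bZ$-gradings is classified by $2\in H^2(B\Sp;\bZ)\cong\bZ$, not by a generator; it has two components and is not $\widetilde{\Sp}$.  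This is exactly why the obstruction to a $\bZ$-grading is $2c_1$ rather than $c_1$, and why, as the paper itself notes just above, Seidel's $\Sp^2(2n)=\bZ/2\times\Sp(2n)$ is the \emph{trivial} $\bZ/2$-extension rather than the metaplectic double cover.  For surfaces the proposition survives (since $H^2(M;\bZ)$ is torsion-free one has $c_1=0\iff 2c_1=0$, and both lifting spaces turn out to be $\mathrm{Map}(M,S^1)$ when nonempty), but your argument does not establish the required identification, and the evident comparison map from $\widetilde{\Sp}$-lifts to $\Sp^\infty$-lifts is not an equivalence once $H^1(M;\bZ/2)\neq 0$.
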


In particular, to give a $\bZ$-grading of $M$ is equivalent to giving a $G$-torsor on $M$ compatible with the symplectic structure.  In this section we give a combinatorial version of a $\bZ$-graded symplectic surface in terms of ribbon graphs.

\subsubsection{Unwindings of cyclically ordered sets}

If the essential combinatorial ingredient in the definition of a ribbon graph is the notion of a cyclic order, then the essential combinatorial ingredient in the definition of a $\bZ$-graded ribbon graph is the notion of an ``unwinding'' of cyclic order.

\begin{definition}
For each integer $n \geq 2$ let $G_n$ be the group generated by commuting symbols $R$ and $S$ subject to the relation $R^n = S^2$.  Thus, we have $G_n \cong \bZ$ if $n$ is odd and $G_n \cong \bZ \times \bZ/2$ if $n$ is even.
\end{definition}

Denote by $\rho:G_n \to \bZ/n$ the homomorphism that sends $R$ to $1 + n \bZ$ and $S$ to $0$---it exhibits $G_n$ as an extension of $\bZ/n$ by $\bZ$.  Denote by $\sigma:G_n \to \bZ/2$ the homomorphism that sends $R$ to $0$ and $S$ to $1$.

\begin{definition}
Let $C$ be a $\bZ/n$-torsor and let $\tau$ be a $\bZ/2$-torsor.  An \emph{unwinding} of $(C,\tau)$ is tuple $(\tilde{C},\sigma,\rho)$ where $\tilde{C}$ is a $G_n$-torsor, $\rho:\tilde{C} \to C$ is a map equivariant with respect to the homomorphism $\rho:G_n \to \bZ/n$, and $\sigma:\tilde{C} \to \tau$ is map equivariant with respect to the homomorphism $\sigma:G_n \to \bZ/2$.  We refer to the diagram $C \stackrel{\rho}{\leftarrow} \tilde{C} \stackrel{\sigma}{\rightarrow} \tau$ as a \emph{$G_n$-unwinding}.
\end{definition}

\begin{remark}
Just as we visualize a $\bZ/n$-torsor as a cyclically ordered set, we can visualize an unwinding $C \leftarrow \tilde{C} \to \tau$ as two rows of elements (indexed by elements of the $\bZ/2$-torsor $\tau$), a ``shift-to-the-right'' operation $R$ on each row, and an operation $S$ that goes back and forth between rows.  For instance, here is a picture of a $3$-unwinding:
$$\xymatrix{\\ \dots}\quad\xymatrix{
\dots & \bullet \ar[rrrd]^{S} \ar[rr]^{R} & {} & \bullet \ar[rrrd]^{S} \ar[rr]^{R} & {} & \bullet \ar[rrrd]^{S} \ar[rr]^{R} & {} & \bullet & {} \dots \\
\bullet \ar[rrru]^{S} \ar[rr]^{R} & {} & \bullet \ar[rrru]^{S} \ar[rr]^{R} & {} & \bullet \ar[rrru]^{S} \ar[rr]^{R} & {} & \bullet \ar[rr]^{R}& {} & \bullet}
\quad\xymatrix{ \\ \dots}$$
\end{remark}

\begin{remark}
Let $C$ be a cyclically ordered set with $n$ elements, let $\tau$ be a $\bZ/2$-torsor, and let $\tau \leftarrow \tilde{C} \rightarrow C$ be an unwinding of $(C,\tau)$.  The action of $S:\tilde{C} \to \tilde{C}$ gives the projection $\tilde{C} \to C$ the structure of a $\bZ$-torsor.  We can describe the action $R:\tilde{C} \to \tilde{C}$ as a ``connection'' on this $\bZ$-torsor: if we regard a minimal pair $(x,y)$ as a ``path'' in $C$, then $R:\tilde{C}_x \to \tilde{C}_y$ can be understood as the ``monodromy'' along this path.  Similarly $R^k:\tilde{C}_{x_0} \to \tilde{C}_{x_k}$ is the monodromy along the minimal chain $(x_0,\ldots,x_k)$.  The relation $R^n = S^2$ can be understood to mean that the monodromy over a minimal loop in $C$ is given by $S^2$.
\end{remark}

\begin{remark}
\label{rem:subunwind}
Let $\tau$ be a $\bZ/2$ torsor, let $C$ be a cyclically ordered set, and let $C' \subset C$ be a subset with the induced cyclic order.  If $\tau \leftarrow \tilde{C} \stackrel{\rho}{\to} C$ is an unwinding of $(C,\tau)$ then $\tau \leftarrow \rho^{-1}(C') \to C'$ is an unwinding of $(C',\tau)$, which we will call the ``induced unwinding'' of $(C',\tau)$.  If $C'$ has $n$ elements, then we can describe the $G_n$-action on $C'$ using the language of the previous remark:
\begin{itemize}
\item $S$ acts just as it does on $C$,
\item If $(x,y)$ is a minimal pair in $C'$ then $R:\rho^{-1}(C')_x \to \rho^{-1}(C'_y)$ is the monodromy in $\tilde{C} \to C$ along the unique minimal chain $(x_0,\ldots,x_k)$ in $C$ with $x_0 = x$ and $x_k = y$.
\end{itemize}
\end{remark}

\subsubsection{$\bZ$-graded ribbon graphs}

\begin{definition}
\label{def:zgrg}
A \emph{$\bZ$-graded ribbon graph} consists of the following data:
\begin{itemize}
\item A $\bZ/2$-graded ribbon graph $(X,\tau)$.
\item A $\bZ$-torsor $\tilde{\tau}_e$ for every edge $e \subset X$, together with a map $\tilde{\tau}_e \to \tau_e$ that is equivariant for the homomorphism $\bZ \to \bZ/2$.
\item An unwinding $\tilde{C}_v \stackrel{q}{\to} C_v$ of $(C_v,\tau)$ for every vertex $v$ of $X$
\item For each half edge $\epsilon$ incident with vertex $v$ and edge $e$, an isomorphism $\theta_\epsilon$ between the $\bZ$-torsors $q^{-1}(\epsilon)$ and $\tilde{\tau}_e$.
\end{itemize}
\end{definition}

\begin{example}
\label{ex:chordal}
If $(X,Z)$ is a chordal ribbon graph, we may endow $X$ with a grading in the following way.
First, since $Z$ is bivalent, the underlying topological space is a one-manifold, and we may choose an orientation (the grading will not depend on this choice, up to isomorphism) which will identify up to homotopy a neighborhood $Star_Z(b)\subset Z$ of each $b\in V_Z$ with a neighborhood of $0\in \bR$.  Further, $Star_X(b)\subset X$ can be identified with a conical Lagrangian in $T^*\bR$ as follows:  call $\sf E$ the half edge of $Star_Z(b)$ identified with $\bR_+$ and $\sf W$ the half edge corresponding to $\bR_-$, then label a half edge $e_b$ of $Star_X(b)$ by $\sf N$
if $({\sf E},e,{\sf W})$ is in the cyclic order of the set $C_b$ of half edges at $b,$ and by $\sf S$ if
$({\sf W},e,{\sf E})$ is in the cyclic order of $C_b$ (by Definition \ref{def:chordal} this defines a unique labeling of $Star_X(b)$).

Now let $\tau$ be the trivial $\bZ_2$-torsor on $X$ and for each $e\in E_X$ let $\tilde{\tau}_e$ be
the trivial $\bZ$-torsor.  Let $D = \{\sf{E,N,W,S}\}$ be the cyclically ordered set corresponding
to the standard compass, and let $\tilde{D}\cong G_4\rightarrow D \cong \bZ/4$ be the unwinding defined by setting $E\leftrightarrow 0\in \bZ/4$.  Since each $C_b\subset D,$ each $C_b$ inherits the induced unwinding $\tilde{C}_b\stackrel{q}{\rightarrow} C_b$ by Remark \ref{rem:subunwind}.
Finally, the compatibility isomorphisms $\theta$ of Definition \ref{def:zgrg} for the unwindings $q$
relate $n\in \bZ$ to the corresponding elements of one of $q^{-1}({\sf E}) = \{S^n\},$
$q^{-1}({\sf N}) = \{RS^n\},$ $q^{-1}({\sf W}) = \{R^2S^n\},$ $q^{-1}({\sf S})= \{R^3S^n\}.$
\end{example}

\begin{remark}
Let $(X,\tau,\tilde{\tau}_e,\{\tilde{C}_v\})$ be a $\bZ$-graded ribbon graph and let $U \subset X$ be an open subgraph.  Then we give $U$ an induced $\bZ$-grading by restricting the data $\tau$, $\{\tilde{\tau}_e\}$, $\tilde{C}_v$ to $U$.

On the other hand let $Y \subset X$ be a closed subgraph, and suppose that $Y$ has no vertices of degree $1$, so that $Y$ is a ribbon graph.  Endow $Y$ with a $\bZ/2$-torsor $\tau_Y$ by restricting $\tau$ to $Y$ and each edge $e \subset Y$ with the $\bZ$-torsor $\tilde{\tau}_e$, and for each vertex $v$ of $Y$ give the set of half-edges of $Y$ incident with $v$ the unwinding induced from the unwinding of the half-edges of $X$ incident with $v$, in the sense of Remark \ref{rem:subunwind}.  Thus $Y$ inherits a $\bZ$-grading.
\end{remark}

\begin{remark}
Let $(X,\tau,\tilde{\tau}_e,\{\tilde{C}_v\})$ be a $\bZ$-graded ribbon graph.  A noncompact boundary component $e_1,\ldots,e_r$ of $X$ induces a map from $\tilde{\tau}_{e_1} \to \tilde{\tau}_{e_2}$ by applying the operator $R$ once at each vertex.  We refer to this map as the ``monodromy along the boundary component.''
\end{remark}

We have the following $\bZ$-graded analog of proposition \ref{prop:jointree}.

\begin{definition}
Let $(X,\tau,\{\tilde{C}_v\}_{v \in V_X})$ be a $\bZ$-graded ribbon tree.  Since $X$ is contractible we may abuse notation and let $\tau$ also denote the $\bZ/2$-torsor of sections of $\tau \to X$.  We endow the leaves $L$ of $X$ with the cyclic order of \ref{prop:jointree}, and define an unwinding $(L,\tau)$ as follows:
\begin{itemize}
\item The total space $\tilde{L}$ is $\coprod_{e \in L} \tilde{\tau}_e$.  The projection $\tilde{L} \to L$ is the tautological one, and the projection $\tilde{L} \to \tau$ is induced by the projections $\tilde{\tau}_e \to \tau$ that are part of the data of a grading on $X$.
\item The action of $S$ is given by the $\bZ$-torsor structure on $\tau_e$.
\item The action of $R$ is given by monodromy along the noncompact boundary components.
\end{itemize}
\end{definition}

We can now define a simple contraction of $\bZ$-graded ribbon trees.

\begin{definition}
Let $X$ and $Y$ be $\bZ$-graded ribbon graphs.  A \emph{simple contraction} $f:X \to Y$ is given by the following data:
\begin{enumerate}
\item A simple contraction of the underlying ribbon graphs.
\item An isomorphism between the $\bZ/2$-torsors $\tau_X$ and $f^* \tau_Y$
\item For each edge $e$ of $X$ that is not contracted, an isomorphism between $\tilde{\tau}_e$ and $\tilde{\tau}_{f(e)}$
\end{enumerate}
For each vertex $v \in Y$, the isomorphism (2) and (3) are required to be compatible with the unwindings of the leaves of $\mathrm{star}(v)$ and those of $f^{-1}(\mathrm{star}(v))$
\end{definition}

Now we can define $\bZ$-graded Fukaya formalisms.  Let $X$ and $Y$ be $\bZ$-graded ribbon graphs.  A \emph{partial contraction} from $X$ to $Y$ is a diagram
$$X \stackrel{j}{\hookleftarrow} U \stackrel{p}{\to} Y$$
where $j$ is an open inclusion and $p$ is a simple contraction.  We write $\RGpc^\bZ$ for the category of ribbon graphs and partial contractions, and $\RTpc^{\bZ}$ for the full subcategory of $\bZ$-graded ribbon trees and partial contractions.  We define the subcategories $(\RGo^\bZ)^{\op}$ , $\RGc^\bZ$, $(\RTo^\bZ)^{\op}$, $\RTc^{\bZ}$ of $\RGpc^\bZ$ and $\RTpc^\bZ$ as before.  

We endow $(\RGpc^\bZ)^{\op}$ with the coarsest Grothendieck topology compatible with the forgetful functor $\RGpc^\bZ \to \RGpc$---it has $(\RTpc^\bZ)^{\op}$ as a basis.  The topology on $(\RGpc^{\bZ})^{\op}$ determines a topology on $(\RGo^\bZ)_{/X}$ for each graded ribbon graph $X$.

\begin{definition}
A \emph{$\bZ$-graded Fukaya formalism} is a functor from $\RGpc^\bZ$ to the $\infty$-category $\dgCat$, satisfying the sheaf condition.
\end{definition}

\begin{proposition}
Let $\Fuk:\RGpc^\bZ \to \dgCat$ be a Fukaya formalism.  The following hold:
\begin{enumerate}
\item For each graded ribbon graph $X$, $\Fuk\vert_{(\RGo^{\bZ})^{\op}_{/X}}$ is a sheaf.
\item Whenever $X \to Y$ is a simple contraction, $\Fuk(X) \to \Fuk(Y)$ is an equivalence.
\end{enumerate}
\end{proposition}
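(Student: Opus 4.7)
The plan is to transport the $\bZ/2$-graded argument given just above essentially verbatim to the $\bZ$-graded setting; the grading data enters only as bookkeeping, since open subgraphs inherit canonical $\bZ$-gradings (as discussed in the remark preceding the statement) and all morphisms in sight are required to preserve the $\bZ$-grading.

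For (1), I would invoke the same continuity observation as in the $\bZ/2$-graded case. The Grothendieck topology on $(\RGpc^\bZ)^{\op}$ was defined as the coarsest topology making the forgetful functor to $\RGpc^{\op}$ continuous, so the induced topology on the slice $(\RGo^\bZ)^{\op}_{/X}$ coincides with the ordinary open-cover topology on $X$. Restriction of a sheaf along a continuous inclusion of sites remains a sheaf, which dispatches (1).

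For (2), I would reproduce the sieve construction. Given a simple contraction $f:X \to Y$ in $\RGc^\bZ$, let $\cU_f$ be the full subcategory of $(\RGpc^\bZ)^{\op}_{/X}$ whose objects are partial contractions $U \leftarrow U' \hookrightarrow X$ that arise by restricting $f$ to an open $\bZ$-graded subgraph $U' \subset X$ whose $f$-image lands in an open $\bZ$-graded subgraph $U \subset Y$; equivalently, those that fit into a commutative square in $\RGo^\bZ$ with $f$ across the bottom and open inclusions on the sides. The uniqueness analysis from the $\bZ/2$-graded proof shows that each such element is determined by $U'$, that $(Y \leftarrow X \stackrel{=}{\to} X)$ is an initial object of $\cU_f$, and that $\cU_f$ is a covering sieve (its initial element has open-inclusion part equal to all of $X$ and so meets every vertex). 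The sheaf condition then gives $\Fuk(X) \simeq \varprojlim_{\cU_f} \Fuk$, and initiality identifies this limit with $\Fuk(Y)$; unwinding the construction shows the resulting equivalence is $\Fuk(f)$.

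I do not expect a serious obstacle. The only genuinely new check relative to the $\bZ/2$-graded proof is that the squares cutting out $\cU_f$ live in $(\RGpc^\bZ)^{\op}$, i.e.\ that open subgraphs and restrictions of simple contractions preserve the full $\bZ$-graded data ($\tau$, the edge $\bZ$-torsors $\tilde{\tau}_e$, and the vertex unwindings $\tilde{C}_v$). This compatibility is built into the definitions of open inclusions and simple contractions of $\bZ$-graded ribbon graphs, so there is nothing substantive to prove; the potential pitfall is merely notational, in tracking that the induced unwindings at vertices of a subgraph $U' \subset X$ match those at the corresponding vertices of $U \subset Y$, which is exactly the content of Remark~\ref{rem:subunwind} applied along the open inclusion.
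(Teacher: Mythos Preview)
Your proposal is correct and follows exactly the approach the paper intends: the $\bZ$-graded proposition is stated without proof immediately after the $\bZ/2$-graded version, and the paper's proof of the latter is precisely the argument you have reproduced (continuity of the slice inclusion for (1); the sieve $\cU_f$ with initial object $Y \leftarrow X \stackrel{=}{\to} X$ for (2)). Your added remarks---that $\cU_f$ is a covering sieve because its initial element already hits every vertex, and that the $\bZ$-grading data restricts compatibly via Remark~\ref{rem:subunwind}---are correct and in fact make explicit two points the paper's $\bZ/2$-graded proof leaves implicit.
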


%
%






\end{document}